\theoremstyle{plain}
\newtheorem{thm}{Theorem}[section]
\newtheorem{cor}[thm]{Corollary}
\newtheorem{lma}[thm]{Lemma}
\newtheorem{fact}[thm]{Fact}
\theoremstyle{definition}
\theoremstyle{remark}
\numberwithin{equation}{section}
\newcommand{\mcA}{\mathcal{A}}
\newcommand{\mcG}{\mathcal{G}}
\newcommand{\mcH}{\mathcal{H}}
\newcommand{\mcM}{\mathcal{M}}
\newcommand{\mbbN}{\mathbb{N}}
\newcommand{\mbbZ}{\mathbb{Z}}
\title{$> k-$homogeneous infinite graphs}
\author{Ove Ahlman}
\address{Ove Ahlman, Department of Mathematics, Uppsala University, Box 480, 75 106 Uppsala, Sweden}
\email{ove@math.uu.se}
\keywords{>k-homogeneous, classification, countably infinite graph}
\subjclass[2010]{05C75, 05C63, 03C50}
\begin{document}
\maketitle
\begin{abstract}
In this article we give an explicit classification for the countably infinite graphs $\mcG$ which are, for some $k$, $\geq$$ k$-homogeneous. It turns out that a $\geq$$k-$homogeneous graph $\mcM$ is non-homogeneous if and only if it is either not $1-$homogeneous or not $2-$homogeneous, both cases which may be classified using ramsey theory.
\end{abstract}
\section{introduction}
A graph $\mcG$ is called \textbf{$k-$homogeneous} if for each induced subgraph $\mcA\subseteq \mcG$ such that $|A|=k$ and embedding $f:\mcA\rightarrow \mcG$, $f$ may be extended into an automorphism of $\mcG$. If $\mcG$ is $t-$homogeneous for each $t\geq k$ ($t\leq k$) then $\mcG$ is called \textbf{$\geq$$k-$homogeneous} (\textbf{$\leq$$k-$homogeneous}). A graph which is both $\geq$$k-$homogeneous and $\leq$$k-$homogeneous is plainly called homogeneous. Lachlan and Woodrow \cite{LV} classified the countably infinite homogeneous graphs. Since then, the study of homogeneous structures has been continued in many different ways. When it comes to countably infinite homogeneous structures Lachlan \cite{L} classified all such tournaments and Cherlin \cite{Ch} classified all such digraphs. Even more kinds of infinite homogeneous structures have been classified, however few results about infinite $k-$homogeneous structures which are not homogeneous seem to exist. When it comes to finite structures Gardiner \cite{G} and independently Golfand and Klin \cite{GK} classified all homogeneous finite graphs. Cameron \cite{Ca} extended this to the $k-$homogeneous context and showed that any $\leq$$5-$homogeneous graph is homogeneous. Thus classifying the finite $k-$homogeneous graphs comes down to the cases which are not $t-$homogeneous for some $t\leq 5$. Among others, Chia and Kok \cite{CK} took on this task and characterized finite $k-$homogeneous graphs with a given number of isolated vertices and nontrivial components. In general though no known characterization of the finite $k-$homogeneous, $\geq$$k-$homogeneous or $\leq$$k-$homogeneous graphs exist. In the present article however we do make progress in the subject when it comes to infinite graphs, and provide a full classification of all $\geq$$k-$homogeneous infinite graphs.
\\\indent
For each $t\in\mbbZ^+$ define the graph $\mcG_t$ as having universe $G_t = \mbbZ \times \{1,...,t\}$ and edges $E = \{\{(a,i),(b,j)\}: a\neq b\}$. Notice that $\mcG_t$ may also be described as the complement of the graph which consists of $\omega$ disjoint copies of $K_t$.
If $t\geq 2$ let $\mcH_{t,1}$ be the graph with universe $H_{t,1} = \mbbZ\times \{1,...,2t\}$ and edges 
\[E_{t,1} = \{\{(a,i),(b,j)\} : i,j\leq t \text{ or } i,j > t, \text{ and } a\neq b \}. \]
Let $\mcH_{t,2}$ have the same universe as $\mcH_{t,1}$ but with edge set
\[E_{t,2} = E_{t,1} \cup \{\{(a,i),(b,j)\}: i\leq t, j>t \text{ and } a=b\}.\]
Lastly define the graph $\mcH_{1,2}$ as having universe $\mbbZ\times \{1,2\}$ and edge set $E= \{\{(a,i),(b,j)\}: i = j \text{ or } a=b\}$.
\begin{figure}
\begin{minipage}{0.45\textwidth}
\centering
\includegraphics{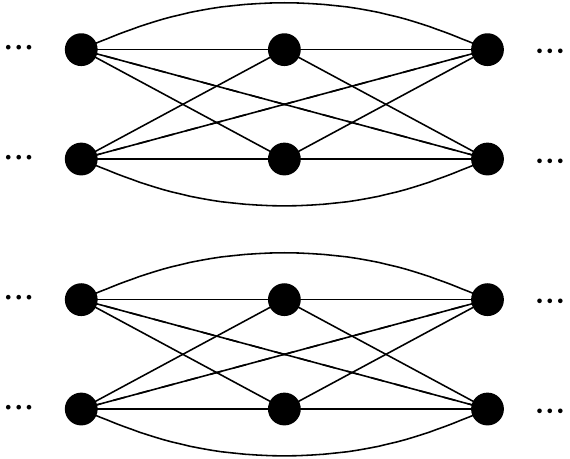}
\caption{The graph $\mcH_{2,1}$}
\end{minipage}\hfill
\begin{minipage}{0.45\textwidth}
\centering
\includegraphics{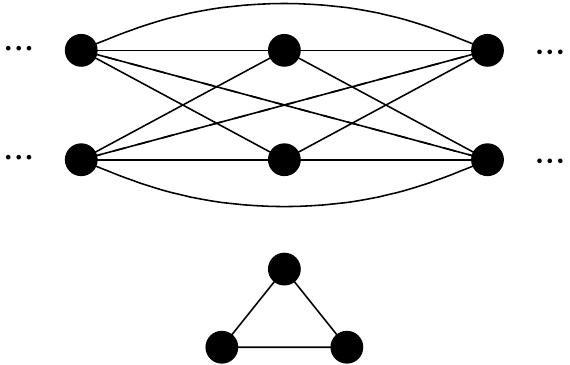}
\caption{The graph $G_2 \dot\cup K_3$}
\end{minipage}
\end{figure}
\begin{thm}\label{thm} Let $\mcM$ be a countably infinite graph. Then $\mcM$ is $\geq$$k-$homogeneous, for some integer $k\geq 1$, if and only if exactly one of the following hold.
\begin{itemize}
\item[(i)] $\mcM$ is a homogeneous graph.
\item[(ii)] $\mcM$ is not $1-$homogeneous and for some finite homogeneous graph $\mcH$ and some $t$ we have that $\mcM\cong \mcG_t\dot\cup\mcH$ or $\mcM\cong (\mcG_t\dot\cup\mcH)^c$.
\item[(iii)] $\mcM$ is $1-$homogeneous but not $2-$homogeneous and for some $t$ we have that $\mcM\cong\mcH_{t,1}$, $\mcM\cong\mcH_{t,2}$, $\mcM\cong\mcH_{t,1}^c$ or $\mcM\cong\mcH_{t,2}^c$.
\end{itemize} 
\end{thm}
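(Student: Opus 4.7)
The ``if'' direction requires verifying each listed graph is $\geq k$-homogeneous for some $k$. Case (i) is immediate by definition. For case (ii), if $\mcM\cong\mcG_t\dot\cup\mcH$ with $\mcH$ a finite homogeneous graph, I would take $k$ large compared to $|H|$, so that any induced $k$-subgraph $\mcA$ contains enough of the $\mcG_t$-part to make that part distinguishable from the $\mcH$-part from internal structure alone; any embedding $f\colon\mcA\to\mcM$ then respects the decomposition, and extends to an automorphism using transitivity of $\mathrm{Aut}(\mcG_t)$ on its clusters together with the homogeneity of $\mcH$. The argument dualizes to the complement. The graphs $\mcH_{t,i}$ of case (iii) admit a parallel analysis: for $k$ exceeding block sizes, the block partition $\mbbZ\times\{1,\ldots,2t\}$ is detectable from any $k$-subset, so embeddings are forced to respect it.

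For the ``only if'' direction, suppose $\mcM$ is $\geq k$-homogeneous and non-homogeneous. The first key step is to show $\mcM$ must fail $1$- or $2$-homogeneity. Assuming $\mcM$ is both $1$- and $2$-homogeneous yet not $j$-homogeneous for some $3\leq j<k$, I would find two isomorphic induced $j$-subgraphs $\mcA,\mcA'\subseteq\mcM$ in distinct $\mathrm{Aut}(\mcM)$-orbits; by $1$- and $2$-homogeneity one inductively aligns vertices and edges, and an infinite Ramsey-type extraction around $\mcA$ and $\mcA'$ enlarges both to isomorphic $k$-subsets, producing an automorphism between them, a contradiction. This reduces the classification to the two mutually exclusive cases of the theorem.

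For case (ii), $\geq k$-homogeneity together with failure of $1$-homogeneity forces $\mathrm{Aut}(\mcM)$ to have only finitely many vertex orbits---otherwise one constructs a $k$-subset with inequivalent embeddings from distinct orbits. Replacing $\mcM$ by its complement if needed, I would argue that at most one orbit is infinite. Let $O$ be this orbit; then $\mcM\uhr O$ is vertex-transitive and inherits $\geq k$-homogeneity, and analyzing the non-neighborhood of a vertex $v\in O$ shows it must be finite and of uniform size $t-1$ for some $t$, whence $\mcM\uhr O\cong\mcG_t$. The remaining finite vertex set then forms the candidate $\mcH$; a final application of $\geq k$-homogeneity of $\mcM$ shows $\mcH$ is homogeneous and has no edges to $O$.

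Case (iii) runs the same strategy one level higher: by $1$-homogeneity fix a vertex $v$, and observe that $\geq k$-homogeneity forces the stabilizer $\mathrm{Aut}(\mcM)_v$ to have only finitely many orbits on the remaining vertices. Failure of $2$-homogeneity produces two such orbits having the same adjacency to $v$, and iterating vertex-transitivity together with an analysis of how these orbits fit with each other yields the block partition $\mbbZ\times\{1,\ldots,2t\}$; after possibly passing to the complement, the adjacency matches one of $\mcH_{t,1}$, $\mcH_{t,2}$, or $\mcH_{1,2}$. The main obstacle in both structural cases is the rigidity step pinning down exactly which graph $\mcM$ must be: ruling out competing vertex-transitive $\geq k$-homogeneous structures that satisfy the same local orbit data requires a careful combinatorial interplay between the finite orbit count, Ramsey-type extraction, and the specific edge patterns of $\mcG_t$ and $\mcH_{t,i}$.
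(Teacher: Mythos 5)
Your top-level architecture matches the paper's exactly: split into the three mutually exclusive cases, use Ramsey's theorem to get $K_\infty$ or $K_\infty^c$ and argue for one of them with the other following by complementation, and verify the ``if'' direction by showing that for $k$ large the decomposition of $\mcG_t\dot\cup\mcH$ (resp.\ $\mcH_{t,i}$) is detectable inside any $k$-element subgraph. That part of your sketch is sound and is essentially the paper's argument. However, the two places you leave as sketches are precisely where all the work lies, and in at least one of them your proposed mechanism does not actually close the gap.

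The serious gap is in your reduction showing that a $1$- and $2$-homogeneous, $\geq$$k$-homogeneous graph is homogeneous. You propose to take isomorphic $j$-subgraphs $\mcA,\mcA'$ in distinct orbits and ``enlarge both to isomorphic $k$-subsets'' by a Ramsey-type extraction, whence an automorphism. But the extension must be an isomorphism \emph{extending the given isomorphism} $\mcA\to\mcA'$, and the obstruction to finding such an extension is exactly the structural difference that puts $\mcA$ and $\mcA'$ in different orbits in the first place; asserting the extraction succeeds begs the question. The paper instead takes $n$ minimal such that some $n$-orbit is not determined by isomorphism type, writes $\mcG_1=\mcG\cup\{a\}$ and uses $(n-1)$-homogeneity to produce a second completion $b$ of $\mcG$ lying in the other orbit, then proves: no $K_{k+n}$ is jointly (non-)adjacent to $a$ and $b$; hence $a\not\mathrel{E}b$; hence $a,b$ are adjacent to all of $\mcG$ and $\mcG_1\cong K_n$; and finally derives a contradiction by playing several infinite cliques $\mcH_b,\mcH_c,\mcH_d$ against one another. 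The reduction to $\mcG_1\cong K_n$ is the key idea and is absent from your proposal. Similarly, in cases (ii) and (iii) you explicitly flag the ``rigidity step'' as the main obstacle without resolving it; in the paper this is the bulk of the argument --- e.g.\ for (ii), showing that non-adjacency restricted to the infinite orbit is an equivalence relation with classes of constant finite size (so that the orbit is $\mcG_t$ and not some other vertex-transitive graph with finite co-degree), and for (iii), the full analysis of the at most four nontrivial $2$-orbits $p_1,p_2,q_1,q_2$, their finiteness and transitivity properties, and the case split on which of $p_2,q_2$ exist that yields $\mcH_{1,2}$, $\mcH_{n,1}$ or $\mcH_{n,2}$. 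As it stands the proposal identifies the correct skeleton but does not supply the arguments that make it a proof.
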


\noindent As the finite homogeneous graphs are classified, in \cite{G} and \cite{GK}, as either the $3\times 3$ rook graph\footnote{The line graph of the complete bipartite graph with 3 vertices in each part i.e. how a rook moves on a 3 by 3 chessboard}, the $5-$cycle, a disjoint union of complete graphs or the complement of one of the previous graphs, case (ii) is complete.
We prove (ii) in Section \ref{sec1} Lemma \ref{thm1}, (iii) in Section \ref{sec2} Lemma \ref{twothm} and lastly in Section \ref{sec3} Lemma \ref{thirdmainlemma} we show that any $\geq$$k-$homogeneous graph which does not fit into (ii) or (iii) has to be homogeneous, in other words, (i) is proven. \\\indent 
A graph $\mcG$ is \textbf{homogenizable} if there exists a homogeneous structure $\mcM$ with a finite amount of extra relational symbols in its signature compared to $\mcG$ such that the automorphism groups of $\mcM$ and $\mcG$ are the same and if we remove all extra relations from $\mcM$, we get $\mcG$. For a more detailed definition of homogenizable structures and explicit examples see \cite{A,M}. From the proof of Theorem \ref{thm} we can draw the following two corollaries which relate to being homogenizable.

\begin{cor}\label{onecor}
If, for some $k$, $\mcM$ is a countably infinite $\geq$$k-$homogeneous graph which is not $1-$homogeneous then $\mcM$ is homogenizable by only adding a single unary relation symbol.
\end{cor}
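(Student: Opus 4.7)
The strategy is to invoke Theorem~\ref{thm}: since $\mcM$ is $\geq$$k$-homogeneous and not $1$-homogeneous, case (ii) applies, so $\mcM \cong \mcG_t \dot\cup \mcH$ or $\mcM \cong (\mcG_t \dot\cup \mcH)^c$ for some $t \in \mbbZ^+$ and some finite homogeneous graph $\mcH$. I would then expand $\mcM$ to a structure $\mcM^+$ by adjoining a single unary predicate $P$ interpreted as the vertices belonging to the $\mcG_t$-part (the other part being picked out by $\neg P$).

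First I would check that $\Aut(\mcM^+) = \Aut(\mcM)$, so that attaching $P$ does not change the automorphism group. This reduces to showing that every automorphism of $\mcM$ respects the two-part partition. In the disjoint-union case, vertices of the $\mcG_t$-part have infinite degree in $\mcM$ while vertices of the $\mcH$-part have degree at most $|\mcH| - 1$; in the complement case a short calculation shows these roles swap, but the partition is still distinguished by the dichotomy finite versus infinite degree. Either way $P$ is already $\Aut(\mcM)$-invariant.

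Next I would establish homogeneity of $\mcM^+$. Any finite partial isomorphism of $\mcM^+$ preserves $P$, and hence decomposes as the disjoint union of a finite partial isomorphism $f_1$ on the $\mcG_t$-part and a finite partial isomorphism $f_2$ on the $\mcH$-part; the edges across the partition are either all absent (in the disjoint-union case) or all present (in the complement case), so no further compatibility is required. Since $\mcH$ is homogeneous, $f_2$ extends to an automorphism of $\mcH$. The remaining ingredient is homogeneity of $\mcG_t$ itself, which I would verify directly: non-adjacency in $\mcG_t$ coincides with the equivalence relation ``same first coordinate'', whose classes all have size exactly $t$; any finite partial isomorphism of $\mcG_t$ must therefore respect this relation on its domain and can be extended to a global automorphism by suitably extending a bijection of $\mbbZ$ on the first coordinate together with independent permutations of $\{1,\dots,t\}$ on each fibre. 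Combining the global extensions of $f_1$ and $f_2$ yields an automorphism of $\mcM^+$ extending the given partial isomorphism.

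I expect the only slightly nontrivial step to be the verification that $\mcG_t$ is homogeneous, which is a short combinatorial exercise as sketched; everything else is routine bookkeeping once Theorem~\ref{thm}(ii) is in hand. In particular, a single unary symbol suffices because only one binary ``connected-component'' distinction needs to be made visible to produce the homogeneous expansion.
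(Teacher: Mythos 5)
Your proposal is correct and follows exactly the route the paper intends: the paper derives Corollary~\ref{onecor} implicitly from the classification in case (ii) of Theorem~\ref{thm} (i.e.\ Lemma~\ref{thm1}), where the two parts of $\mcG_t\dot\cup\mcH$ (or its complement) are the two $1$-orbits, so the unary predicate naming one part is automorphism-invariant and the expansion is homogeneous because each part is homogeneous and the cross-edges are uniform. Your verification of the homogeneity of $\mcG_t$ and of the degree dichotomy just fills in details the paper leaves unstated.
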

\begin{cor}\label{twocor}
If, for some $k$, $\mcM$ is a countably infinite $\geq$$k-$homogeneous graph which is $1-$homogeneous but not $2-$homogeneous then $\mcM$ is homogenizable by adding only a single binary relation symbol and one of the following holds:
\begin{itemize}
\item $k= 5$ and $\mcM\cong \mcH_{1,2}$ or $\mcM\cong \mcH_{1,2}^c$ .
\item $k = 2n+1>3$ and $\mcM \cong \mcH_{n, 1}$ or $\mcM\cong \mcH_{n,1}^c$.
\item $k = 4n+1>5$ and $\mcM\cong\mcH_{n,2}$ or $\mcM\cong \mcH_{n,2}^c$.
\end{itemize}
\end{cor}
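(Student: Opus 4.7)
The plan is to invoke Theorem \ref{thm}(iii) to reduce to the cases $\mcM \cong \mcH_{1,2}$, $\mcH_{n,1}$, $\mcH_{n,2}$, or a complement of one of these, and then verify the two assertions (homogenizability by a single binary relation, and identification of the minimal $k$) case by case. Since $k$-homogeneity is preserved under complementation and any homogenizing expansion of $\mcM$ yields a homogenizing expansion of $\mcM^c$ by negating the graph relation, it suffices to treat the non-complemented cases.

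For homogenizability I expand $\mcM$ by a single binary relation $R$ expressing the natural equivalence relation with two infinite classes: the block partition for $\mcH_{n,1}$ and $\mcH_{n,2}$, and the same-row partition for $\mcH_{1,2}$. In each case $R$ is $\Aut(\mcM)$-invariant (indeed, definable from the graph alone), so the automorphism group is preserved; homogeneity of the expansion then follows by a standard back-and-forth argument using the ultrahomogeneity of each equivalence class together with the compatibility of the cross-class structure (matching edges for $\mcH_{n,2}$ and $\mcH_{1,2}$, none for $\mcH_{n,1}$).

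To pin down the precise value of $k$ I proceed in two halves. For $(k-1)$-non-homogeneity I exhibit a $(k-1)$-subset $S$ whose induced graph admits a self-isomorphism that does not lift to $\Aut(\mcM)$. For $\mcH_{n,1}$ ($n \geq 2$), take $n$ vertices in one column of $B_1$ together with $n$ in one column of $B_2$, so that $G(S) = \bar{K_{2n}}$ has $(2n)!$ self-isomorphisms while only the subgroup $S_n \wr S_2$ of order $2(n!)^2$ is realized inside $\Aut(\mcH_{n,1})$. For $\mcH_{n,2}$ ($n \geq 2$), take two full columns of $\mcM$; the induced graph is a $4$-partite graph on four size-$n$ parts with $K_{n,n}$ edges arranged around a $4$-cycle, whose $D_4$-symmetry on the four parts properly contains the Klein four-group realized inside $\Aut(\mcH_{n,2})$. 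For $\mcH_{1,2}$ the analogous $4$-subset induces $C_4$, whose rotation does not preserve the row/column structure.

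For the matching upper bound I apply a pigeonhole argument: once $|S| \geq k$, the size forces enough columns and/or blocks of $\mcM$ to appear in $S$ that $G(S)$ uniquely determines the block-and-column decomposition of $S$ up to automorphism, and then the homogeneity of the expansion from the previous paragraph lifts any isomorphism of $G(S)$ to an automorphism of $\mcM$. For $\mcH_{n,1}$ with $|S| = 2n+1$, some block must contain $\geq n+1$ vertices of $S$, hence at least two distinct columns, so $G(S)$ has a connected multipartite component of size $\geq n+1$ that uniquely identifies $S \cap B_1$; within each block the column partition is then recovered as the non-adjacency equivalence classes. Analogous but more delicate arguments handle $\mcH_{n,2}$ (where $|S| = 4n+1$ forces at least $3$ distinct columns of $\mcM$ and thereby breaks the dihedral ambiguity of the $4$-cycle witness) and $\mcH_{1,2}$. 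The main obstacle is the upper bound for $\mcH_{n,2}$, where the cross-block matching edges interact with both the block and the column equivalences, and ruling out all alternative realizations of $G(S)$ requires a careful case split on the admissible column multiplicities of $S$.
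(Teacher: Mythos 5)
Your proposal is correct and follows essentially the route the paper intends: the corollary is extracted from the proof of Lemma \ref{twothm}, where the thresholds $2t+1$ and $4t+1$ arise from exactly the edge-forcing and triangle-forcing pigeonhole arguments you describe, and the homogenizing binary relation is the same block/row equivalence (as the paper's remark contrasting unary with binary predicates indicates). The only material you add is the explicit $(k-1)$-element witnesses for sharpness (full columns inducing $\overline{K_{2n}}$, the $4$-cycle of complete bipartite graphs, and the induced $C_4$), which the paper leaves implicit but which are indeed needed to justify the stated equalities for $k$.
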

Note that an overlap between cases is to be expected. Corollary \ref{onecor} does not have an explicit classification of $\mcM$ depending on $k$, such as Corollary \ref{twocor}, since $\mcM\cong\mcG_t\dot\cup\mcH$ or $\mcM\cong(\mcG_t\dot\cup\mcH)^c$ where both $t$ and $\mcH$ affect for which $k$ that $\mcM$ is $\geq$$k-$homogeneous. It is clear in Corollary \ref{twocor} that we may add a unary relation symbol to make the graph homogeneous. This however does not follow the definition of being homogenizable since adding a unary relation symbol changes the automorphism group. In the terminology of Cherlin \cite{Ch2} we have proven that a $\geq$$k-$homogeneous graph has relational complexity at most $2$.

\subsection*{Notation and terminology}
%\section{Preliminaries}
For each $t\in\mbbZ^+$, $K_t$ is the complete graph on $t$ vertices and $K_\infty$ is the countably infinite complete graph. \emph{Whenever we talk about subgraphs $\mcH\subseteq \mcG$ we mean induced subgraph in the sense that for $a,b\in\mcH$ we have that $aE^\mcG b$ if and only if $aE^\mcH b$.} An embedding of graphs $f:\mcG\rightarrow \mcH$ is an injective function such that for each $a,b\in\mcG$ we have $aE^\mcG b$ if and only if $f(a)E^\mcH f(b)$. If we write $K_t\subseteq \mcG$ or $K_\infty\subseteq\mcG$ it means that for some subgraph $\mcH$ of $\mcG$, $\mcH$ is isomorphic to $K_t$ or $K_\infty$ respectively. If $\mcG$ is a graph with $a_1,...,a_r\in G$ then $K_t(a_1,...,a_r)$ is a complete subgraph of $\mcG$ containing $t$ vertices, which includes $a_1,...,a_r$. For $t\in \mbbZ^+$, a $t-$orbit of $\mcG$ is an orbit of $t-$tuples which arise when the automorphism group of $\mcG$ acts on $\mcG^t$. One of our main tools in the proofs is Ramsey's famous theorem about the existence of infinite complete or infinite independent subgraphs which now a days is common practice and may be found in for instance \cite{GRS}.
\begin{fact}[Ramsey's Theorem] If $\mcG$ is an infinite graph then $K_\infty\subseteq \mcG$ or $K_\infty^c\subseteq \mcG$.
\end{fact}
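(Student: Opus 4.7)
Since Ramsey's theorem is classical and cited here from \cite{GRS}, the purpose of this sketch is just to recall the standard inductive argument. Given an infinite graph $\mcG$, the plan is to build recursively vertices $v_0, v_1, \ldots \in G$, a nested decreasing sequence of infinite subsets $G = V_0 \supseteq V_1 \supseteq V_2 \supseteq \cdots$, and colours $c_0, c_1, \ldots \in \{0,1\}$ with the invariant that $v_n \in V_n$ and every $w \in V_{n+1}$ satisfies $w E^\mcG v_n$ iff $c_n = 1$.

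To initialise, set $V_0 = G$ and pick any $v_0 \in V_0$. Given $v_n \in V_n$ with $V_n$ infinite, partition $V_n \setminus \{v_n\}$ into the neighbours and non-neighbours of $v_n$; at least one part is infinite, so take $V_{n+1}$ to be such an infinite part, set $c_n \in \{0,1\}$ according to which side was chosen, and pick $v_{n+1} \in V_{n+1}$ arbitrarily. Once the sequence is complete, apply the pigeonhole principle to $(c_n)$: some value $c \in \{0,1\}$ is attained at infinitely many indices $n_0 < n_1 < \cdots$. For any $i < j$ one has $v_{n_j} \in V_{n_i + 1}$, so the invariant yields $v_{n_i} E^\mcG v_{n_j}$ precisely when $c = 1$; hence $\{v_{n_k} : k \in \omega\}$ induces $K_\infty$ when $c = 1$ and $K_\infty^c$ when $c = 0$, giving the desired dichotomy.

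There is no real obstacle here; the only point worth flagging is that the inductive step is always available, which reduces to the elementary fact that partitioning an infinite set into two subsets must leave at least one infinite part. This presentation seems cleanest, though one could equivalently deduce the conclusion via K\"onig's lemma applied to the tree of finite monochromatic subsequences.
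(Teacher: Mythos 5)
Your argument is the standard and correct proof of the infinite Ramsey theorem for graphs: the recursive construction of $v_n$, the nested infinite sets $V_n$, and the final pigeonhole step are all sound, and the vertices $v_{n_k}$ are indeed distinct since $v_{n_i}\notin V_{n_i+1}$. The paper itself offers no proof of this Fact --- it is quoted from the literature (\cite{GRS}) --- so there is nothing to compare against; your write-up matches the classical argument that the citation points to.
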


\section{Graphs which are not $1-$homogeneous}\label{sec1}
\begin{lma}\label{thm1} For some $k\in \mbbN$, $\mcM$ is a $\geq$$k-$homogeneous graph which is not $1-$homogeneous if and only if there exists $t\in \mbbN$ and a finite homogeneous graph $\mcH$ such that $\mcM \cong \mcG_t\dot\cup\mcH$ or $\mcM \cong (\mcG_t\dot\cup\mcH)^c$.
\end{lma}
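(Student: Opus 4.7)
The plan is to prove both directions of the equivalence, with $(\Leftarrow)$ being soft and $(\Rightarrow)$ carrying the main combinatorial content.

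For $(\Leftarrow)$, I assume $\mcM \cong \mcG_t \dot\cup \mcH$ with $\mcH$ non-empty finite homogeneous (the complement case is symmetric since both $\geq k$-homogeneity and non-$1$-homogeneity are preserved under complementation). Non-$1$-homogeneity is immediate since vertices of $\mcG_t$ have infinite degree while vertices of $\mcH$ have finite degree. For $\geq k$-homogeneity I take $k := 2|H| + t$ and show that, for any induced $\mcA \subseteq \mcM$ with $|A| \geq k$ and any embedding $f \colon \mcA \to \mcM$, the partition $\mcA = (\mcA \cap \mcG_t) \cup (\mcA \cap \mcH)$ is respected. This is a counting argument: a vertex of $\mcG_t$ in $\mcM$ has at most $(t-1) + |H|$ non-neighbors while a vertex of $\mcH$ has at most $|H| - 1$ neighbors, so when $|A|$ is sufficiently large, the numbers of neighbors and non-neighbors that each $v \in A$ has inside $A$ force $f(v)$ onto the correct side. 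Once the partition is preserved, I extend $f \uhr (\mcA \cap \mcG_t)$ using homogeneity of $\mcG_t$ and $f \uhr (\mcA \cap \mcH)$ using homogeneity of $\mcH$, and combine.

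For $(\Rightarrow)$, by Ramsey and possibly replacing $\mcM$ by $\mcM^c$, I assume $K_\infty \subseteq \mcM$. Let $O_\infty$ be the set of vertices of $\mcM$ lying in some infinite clique. For $v, w \in O_\infty$, extend each to a $K_k \subseteq \mcM$; both are isomorphic to $K_k$, so $\geq k$-homogeneity gives an automorphism sending $v$ to $w$, and therefore $O_\infty$ is a single $1$-orbit. I then establish three structural claims in sequence: (a) $\mcF := V(\mcM) \setminus O_\infty$ is finite; (b) there are no edges between $O_\infty$ and $\mcF$, so $\mcM = O_\infty \dot\cup \mcF$ as graphs; (c) the induced graph on $O_\infty$ is isomorphic to $\mcG_t$ for some $t$, and $\mcF$ carries the structure of a finite homogeneous graph $\mcH$.

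For (a), suppose $\mcF$ is infinite; Ramsey inside $\mcF$ combined with the very definition of $O_\infty$ (which rules out $K_\infty \subseteq \mcF$) yields an infinite independent set $\{w_1, w_2, \ldots\} \subseteq \mcF$. Picking $v \in O_\infty$ lying in some $K_\infty \subseteq \mcM$ and, after a pigeonhole refinement of the $w_i$ controlling their adjacency to $v$, I assemble a mixed $k$-tuple whose induced-subgraph isomorphism type can also be realised using only vertices of $O_\infty$; then $\geq k$-homogeneity forces the $\mcF$-vertices into $O_\infty$, contradicting the orbit split. Claim (b) is proved by the same template: given a hypothetical edge $vEw$ with $v \in O_\infty$, $w \in \mcF$, the vertex $w$ has only finitely many neighbors in any $K_\infty$ through $v$ (else $w \in O_\infty$), and one builds a $k$-tuple consisting of $v$, $w$, and $k-2$ clique-companions of $v$ non-adjacent to $w$ whose isomorphism type admits an $O_\infty$-internal realisation, producing a contradiction through $\geq k$-homogeneity. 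For (c), the induced graph on $O_\infty$ inherits $\geq k$-homogeneity from $\mcM$ (automorphisms of $\mcM$ preserve orbits and so restrict to $O_\infty$), a small-to-large extension argument upgrades this to full homogeneity, and the Lachlan--Woodrow classification together with the absence of an infinite independent set in $O_\infty$ (from the same pigeonhole ideas used in (a) and (b)) forces $O_\infty \cong \mcG_t$; finally $\mcF$, being finite and $\geq k$-homogeneous, is automatically homogeneous.

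The main obstacle is claim (a). The delicate point is that the contradiction depends on producing a $k$-tuple entirely within $O_\infty$ having the same isomorphism type as the mixed $k$-tuple straddling $O_\infty$ and $\mcF$, and this requires enough a priori information about the internal clique-and-independent-set patterns present in $O_\infty$ before $O_\infty$ has itself been classified. The argument must therefore bootstrap coarse structural facts about $O_\infty$ from Ramsey and pigeonhole to constrain the choice of mixed $k$-tuple, and only at the end invoke Lachlan--Woodrow to complete the classification.
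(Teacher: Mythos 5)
Your converse direction is correct and essentially the paper's (a degree-counting argument in place of the paper's connectivity argument, both forcing every embedding of a large subgraph to respect the partition into $\mcG_t$ and $\mcH$), and your decomposition of the forward direction --- a single orbit $O_\infty$ of vertices lying in infinite cliques, a finite remainder $\mcF$ with no edges to $O_\infty$, $O_\infty\cong\mcG_t$ and $\mcF$ homogeneous --- is also the paper's. The gap is in the engine you propose for claims (a) and (b), which you flag yourself but do not close. You want a mixed $k$-tuple straddling $O_\infty$ and $\mcF$ whose isomorphism type is also realised entirely inside $O_\infty$, so that the extending automorphism carries an $\mcF$-vertex into the automorphism-invariant set $O_\infty$. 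But the types that naturally arise (an independent $k$-set, or a $(k-1)$-clique with one pendant or isolated vertex attached) need not occur inside $O_\infty$ at all: in the graphs actually being classified $O_\infty\cong\mcG_t$, in which any vertex is non-adjacent to at most $t-1$ others and to at most one vertex of any clique; for example in $\mcM=K_\infty\dot\cup K_1$ the set $O_\infty$ contains no non-edge whatsoever. So the internal realisation you need is unavailable precisely in the target cases. The paper's device is different and is the idea you are missing: arrange by pigeonhole for the mixed configuration to be \emph{itself} a $K_{k+1}$ (a $q$-vertex adjacent to $k$ vertices of a clique in $p$, for Lemma \ref{pqnonadj}) or an independent set of size $k+1$ (a $p$-vertex non-adjacent to $k$ pairwise non-adjacent $q$-vertices, for finiteness of $q$); such a configuration admits a self-embedding transposing the exceptional vertex with an ordinary one, and extending that self-embedding to an automorphism of $\mcM$ merges the two $1$-orbits directly. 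No realisation inside $O_\infty$ is ever required.

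The second gap is in claim (c): you propose to upgrade the inherited $\geq k$-homogeneity of $O_\infty$ to full homogeneity by a ``small-to-large extension argument'' and then invoke Lachlan--Woodrow. That upgrade is false as a general principle: $\mcH_{t,1}$ from Section \ref{sec2} is $\geq k$-homogeneous, has a single $1$-orbit, every vertex lies in an infinite clique, and it is not homogeneous. Some concrete structural input is therefore needed, and the paper supplies it directly without Lachlan--Woodrow: Lemmas \ref{pnonadjlma}--\ref{adjlma} show that non-adjacency restricted to $p$ is an equivalence relation with classes of a fixed finite size $t$ and that adjacent pairs of $p$ lie in a common $K_\infty$, which identifies $p$ with $\mcG_t$ outright; the finiteness and $\geq k$-homogeneity of $\mcM$ then give homogeneity of $q$ by the extension argument of Lemma \ref{fin1lma}.
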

The proof of this lemma is left for the end of this section. In the rest of this section we assume that $\mcM$ is $\geq$$k-$homogeneous but not $1-$homogeneous, thus $\mcM$ has more than one $1-$orbit. Due to Ramsey's theorem $K_\infty$ or $K_\infty^c$ is embeddable in $\mcM$. We will assume that $K_\infty$ is embeddable in $\mcM$. The reader may notice that all the reasoning in this section may be done in the same way if $K_\infty^c$ would be embeddable by switching all references of edges and non-edges, thus producing a result for the complement. 
\begin{lma}
There are exactly two $1-$orbits in $\mcM$ and one of them is finite.
\end{lma}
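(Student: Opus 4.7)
The plan is to identify $O_1$ as the orbit of vertices lying in some infinite clique of $\mcM$ and $O_2$ as its complement, then prove $O_2$ is a single finite orbit. First I would set $O_1 = \{v \in M : v \text{ lies in some } K_\infty \subseteq \mcM\}$; this is non-empty (by the standing assumption $K_\infty \subseteq \mcM$) and $\Aut(\mcM)$-invariant, since automorphisms send infinite cliques to infinite cliques. To see $O_1$ is a single $1$-orbit, given $v, v' \in O_1$ inside infinite cliques $C, C'$ respectively, pick $K_k$-subcliques $A \subseteq C$ through $v$ and $A' \subseteq C'$ through $v'$; both are of iso type $K_k$, so $k$-homogeneity produces an automorphism sending $v$ to $v'$.

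The key structural fact, obtained from $(k+1)$-homogeneity, is that any vertex belonging to a $K_{k+1} \subseteq \mcM$ lies in $O_1$: $K_{k+1}$ has a single $(k+1)$-orbit, whose first-coordinate projection is a single $1$-orbit, pinned down as $O_1$ via a realization inside some $K_\infty \subseteq O_1$. The symmetric argument gives that any vertex in an induced $K_{k+1}^c \subseteq \mcM$ lies in $O_2 := M \setminus O_1$. Using the first fact, for each $w \in O_2$ the neighborhood $N(w)$ contains no $K_k$ (else $\{w\} \cup K_k$ would be a $K_{k+1}$ through $w$); hence for any $K_\infty = C \subseteq O_1$ we have $|C \setminus N(w)| = \infty$, and choosing a $K_k$-subset $A \subseteq C \setminus N(w)$ gives a $(k+1)$-tuple $(w, A)$ of iso type ``$K_k$ plus an isolated vertex''. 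For any two $w_1, w_2 \in O_2$ the corresponding tuples have the same iso type, so $(k+1)$-homogeneity provides an automorphism with $w_1 \mapsto w_2$, proving $O_2$ is a single $1$-orbit.

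For the finiteness of $O_2$, I would argue by contradiction. Suppose $O_2$ is infinite. Since $O_2$ contains no $K_\infty$, Ramsey yields an infinite independent set $I \subseteq O_2$, so $K_{k+1}^c \subseteq \mcM$, and the dual structural fact forces no $v \in O_1$ to sit in a $K_{k+1}^c$. Thus the non-neighborhood $\bar N(v)$ has independence number at most $k-1$ for every $v \in O_1$; sharpening via the same analysis applied to the $k$-orbit of $K_k^c$ (realized in $I$ with first coordinate in $O_2$) pushes this to at most $k-2$, and in particular $|I \setminus N(v)| \leq k-2$, so $|N(v) \cap I| = \infty$. Orbit-transitivity of $\Aut(\mcM)$ on each of $O_1, O_2$ ensures the quantities $r := |N(w) \cap O_1|$ and $\bar s := |O_2 \setminus N(v)|$ are well-defined and finite. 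The hard part will be to exploit these constraints to exhibit a $(k+1)$-vertex iso type --- for instance, two adjacent vertices together with $k-1$ pairwise non-adjacent ones with a suitable pattern of cross-edges --- realized both with first coordinate in $O_1$ and with first coordinate in $O_2$, contradicting $(k+1)$-homogeneity. The case analysis naturally splits on whether $O_1 \cong K_\infty$ versus has non-edges, and on whether $O_2$ is itself independent, with the finite constants $r, \bar s$ guiding the choice of iso type in each branch.
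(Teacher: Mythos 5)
Your first half --- that there are exactly two $1$-orbits --- is correct and follows essentially the same route as the paper: one orbit consists of the vertices lying in infinite cliques, and any two vertices outside it are identified via a $K_k$ inside an infinite clique that avoids both of their neighbourhoods. One remark: your blanket claim in the second paragraph that every vertex of an induced $K_{k+1}^c$ lies in $O_2$ is not justified at that point --- a priori the unique orbit of $(k+1)$-element independent sets could sit entirely inside $O_1$ --- but you only actually invoke it after producing an infinite independent set $I\subseteq O_2$, where it does become valid, so this is a presentational slip rather than an error.

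The genuine gap is in the finiteness half. After establishing that $|N(w)\cap C|\leq k-1$ for every $w\in O_2$ and every infinite clique $C\subseteq O_1$, and that $|I\setminus N(v)|\leq k-2$ for every $v\in O_1$, you stop and announce that ``the hard part'' is still to come, sketching a case analysis on the structure of $O_1$ and $O_2$ that you do not carry out; as written the proof of finiteness is therefore incomplete. What you seem not to notice is that the two bounds you already have collide immediately, with no case analysis: fix $v_1,\dots,v_k$ spanning a $K_k$ inside an infinite clique $C\subseteq O_1$; each $v_i$ is adjacent to all but at most $k-2$ elements of the infinite set $I$, so some $w\in I$ is adjacent to all of $v_1,\dots,v_k$, whence $\{w,v_1,\dots,v_k\}\cong K_{k+1}$ and your own structural fact forces $w\in O_1$, contradicting $w\in I\subseteq O_2$. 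The paper reaches the contradiction by the dual pigeonhole: each $w\in I$ misses at least one vertex of a fixed $K_k\subseteq O_1$, so some vertex $a$ of that $K_k$ is non-adjacent to infinitely many elements of $I$; taking $k$ of them yields a $(k+1)$-element independent set $\mcG\cup\{a\}$ with $\mcG\subseteq O_2$, and permuting it moves $a$ into the orbit of $O_2$. Either one-line finish should replace your deferred case analysis.
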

\begin{proof}
Since $\mcM$ is $\geq$$k-$homogeneous, all elements which are in $K_\infty\subseteq \mcM$ have to be in the same $1-$orbit, call it $p$. Assume that $a,b\notin p$ and note that $a$ and $b$ are adjacent to at most $k-1$ elements in $K_{3k}\subseteq\mcM$. Thus we may find $\mcG\subseteq K_{3k}\subseteq \mcM$ such that $|\mcG| = k$ and no element in $\mcG$ is adjacent to $a$ or $b$. Let $f:\mcG\cup \{a\}\rightarrow \mcG\cup \{b\}$ be the embedding which maps $\mcG$ to $\mcG$ and $a$ to $b$. Since $\mcM$ is $\geq$$k-$homogeneous $f$ may be extended to an automorphism, thus $a$ and $b$ belong to the same orbit.
\\\indent The orbit $p$ has to be infinite since all elements in $K_\infty\subseteq\mcM$ belong to $p$. Assume that the second orbit, call it $q$, is also infinite. By Ramsey's theorem either $K_\infty$ or $K_\infty^c$ is embeddable in $q$, however $K_\infty$ is impossible since $\mcM$ is $\geq$$k-$homogeneous and the orbits $p$ and $q$ are distinct. 
Each element in $K_\infty^c\subseteq q\subseteq \mcM$ is adjacent to at most $k-1$ elements in $K_k\subseteq p$. However then there has to be $a\in K_k\subseteq p$ and $\mcG\subseteq K_\infty^c\subseteq q$ such that $|\mcG|=k$ and $a$ is not adjacent to any element in $\mcG$. But then any embedding $f:\mcG\cup\{a\}\rightarrow \mcG\cup\{a\}$ which does not fixate $a$ will be extendable to an automorphism, by the $\geq$$k-$homogeneity of $\mcM$. Thus $a\in q$, which contradicts that $a\in p$.
\end{proof}
We will keep notation from the previous Lemma and let $p$ be the infinite $1-$orbit and let $q$ be the finite $1-$orbit in $\mcM$.
\begin{lma}\label{pqnonadj} If $a\in p$ and $b\in q$ then $a$ is not adjacent to $b$.
\end{lma}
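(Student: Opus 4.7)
The plan is to derive a contradiction from the assumption that some $a \in p$ is adjacent to some $b \in q$. The overall idea is to use $\geq k$-homogeneity on a sufficiently large clique through $a$ to manufacture more distinct elements of $q$ than $q$ actually contains, contradicting the finiteness of $q$ established in the previous lemma.

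First I would record that every element of $p$ lies in the vertex set of some $K_\infty \subseteq \mcM$: by the previous lemma the vertices of any $K_\infty \subseteq \mcM$ all lie in $p$, and since $p$ is a single $1$-orbit I can translate such a clique by an automorphism to pass through $a$. Call the resulting vertex set $W$.

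The key step, and the one I expect to be the main obstacle, is to argue that $b$ has only finitely many neighbours in $W$. If instead infinitely many vertices of $W$ were adjacent to $b$, they would themselves form a $K_\infty$ (being part of $W$), and adjoining $b$ would produce yet another $K_\infty \subseteq \mcM$ containing $b$; by the previous lemma this would force $b \in p$, contradicting $b \in q$. With this in hand, for any integer $n \geq k$ with $n > |q|$ (note that $k \geq 2$ since $\mcM$ is not $1$-homogeneous, and $|q|$ is finite by the preceding lemma) I can choose $w_1, \dots, w_{n-1} \in W \setminus \{a\}$ all non-adjacent to $b$. Then $X := \{a, w_1, \dots, w_{n-1}\}$ is a $K_n$ subgraph in $p$, and $b$ is adjacent to exactly one vertex of $X$, namely $a$.

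The final step is to invoke $n$-homogeneity, which holds since $n \geq k$. Each permutation $\pi$ of $X$ is an embedding of the induced $K_n$ into itself and so extends to an automorphism $\sigma_\pi \in \Aut(\mcM)$. Because automorphisms preserve $1$-orbits, $\sigma_\pi(b) \in q$, and its unique neighbour in $X$ is $\pi(a)$. Letting $\pi(a)$ range over the $n$ vertices of $X$ produces $n$ pairwise distinct elements of $q$, contradicting $|q| < n$ and completing the proof.
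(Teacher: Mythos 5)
Your proof is correct, and it reaches the contradiction by a genuinely different route than the paper. The paper first propagates the assumed adjacency over the whole orbit $p$ (if one element of $p$ has a neighbour in $q$, then every element of $p$ does), applies pigeonhole to the finite set $q$ to find a single $b\in q$ adjacent to infinitely many vertices of a fixed $K_\infty$, and then uses one $\geq k$-homogeneous swap inside the resulting clique on $\mcG\cup\{b\}$ to conclude $b\in p$, a contradiction. You instead run the clique argument in the contrapositive direction --- $b$ can have only finitely many neighbours in a $K_\infty$ through $a$, since otherwise $b$ would itself lie in a copy of $K_\infty$ and hence in $p$ --- and then obtain the contradiction by counting: permuting a large clique $X\ni a$ in which $b$ has $a$ as its unique neighbour manufactures $n>|q|$ pairwise distinct images of $b$ inside $q$. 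Both arguments rest on the same two pillars (the finiteness of $q$, and the fact that a vertex with $k$ pairwise adjacent neighbours inside a $K_\infty$ is forced into $p$), but the paper spends the finiteness of $q$ on a pigeonhole at the start while you spend it on the final count; the paper's version is a little shorter, while yours avoids the initial orbit-propagation step and makes the quantitative role of $|q|$ explicit. The individual steps you flag as needing care (existence of a $K_\infty$ through $a$ via transitivity on $p$, distinctness of the images $\sigma_\pi(b)$ read off from their unique neighbours in $X$) all check out.
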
 
\begin{proof}
If some element in $p$ is adjacent to some element in $q$ then for each $a_0\in p$ there exists some $b_0\in q$ such that $a_0$ is adjacent to $b_0$. As $p$ is infinite and $q$ is finite, there has to exist $b\in q$ such that $b$ is adjacent to an infinite amount of vertices in $K_\infty\subseteq\mcM$. However, if $\mcG\subseteq K_\infty\subseteq\mcM$ with $|\mcG|=k$ such that all vertices in $\mcG$ are adjacent to $b$ then there is an embedding $f:\mcG\cup\{b\}\rightarrow \mcG\cup\{b\}$ which does not fixate $b$. Since $\mcM$ is $\geq$$k-$homogeneous it is possible to extend $f$ to an automorphism of $\mcM$. Thus $b\in p$ which is a contradiction.
\end{proof}
As Lemma \ref{pqnonadj} proves that $p$ and $q$ are not connected to each other, the next lemma shows that each element in $p$ is non-adjacent to at most $k-1$ elements in $p$.
\begin{lma} If $a\in p$ then there are at most $k+|q|-1$ elements in $\mcM$ which $a$ is not adjacent to.
\end{lma}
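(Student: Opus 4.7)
The plan is to show that $a$ has at most $k-1$ non-neighbors inside $p$; combined with Lemma \ref{pqnonadj}, which forces all $|q|$ elements of $q$ to be non-neighbors of $a$, this yields the claimed total bound of $k+|q|-1$.

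For the $p$-part I would argue by contradiction. Suppose $a$ has $k$ distinct non-neighbors $b_1,\ldots,b_k$ inside $p$, and fix any $q_1\in q$ (which exists since $q$ is a non-empty $1-$orbit). Consider the two $(k+1)$-element subsets $A=\{a,b_1,\ldots,b_k\}$ and $B=\{q_1,b_1,\ldots,b_k\}$ of $\mcM$.

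The induced subgraphs on $A$ and $B$ are isomorphic via the bijection sending $a\mapsto q_1$ and $b_i\mapsto b_i$. Indeed, in $A$ the vertex $a$ is non-adjacent to each $b_i$ by assumption, while in $B$ the vertex $q_1$ is non-adjacent to each $b_i$ by Lemma \ref{pqnonadj}; and the induced subgraph on the common vertices $\{b_1,\ldots,b_k\}$ is clearly the same on both sides. In particular, this bijection is an embedding of $A$ into $\mcM$.

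Since $|A|=k+1\geq k$, the $\geq$$k-$homogeneity of $\mcM$ lets me extend this embedding to an automorphism $\sigma$ of $\mcM$ satisfying $\sigma(a)=q_1$. But then $a$ and $q_1$ belong to the same $1-$orbit, contradicting $a\in p$, $q_1\in q$ and $p\neq q$. I do not anticipate a real obstacle here: the content of the argument is just that Lemma \ref{pqnonadj} makes $q_1$ indistinguishable from $a$ relative to any $k$-element subset of $p$, so having $k$ or more non-neighbors of $a$ inside $p$ would allow an illegitimate swap between the two orbits.
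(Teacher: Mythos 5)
Your proof is correct and is essentially the paper's argument: both exploit Lemma \ref{pqnonadj} to swap $a$ with an element of $q$ across a set of at least $k$ common non-neighbors lying in $p$, and then invoke $\geq$$k$-homogeneity to contradict the distinctness of the orbits. The only difference is bookkeeping (you split the count into the $p$-part and the $q$-part up front, while the paper starts from $k+|q|$ non-neighbors and intersects with $p$), which changes nothing of substance.
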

\begin{proof}
Assume $a\in p$ is not adjacent to any elements in some $\mcG' \subseteq \mcM$ such that $|\mcG'|=k+|q|$ and let $\mcG= \mcG'\cap p$. By Lemma \ref{pqnonadj} no element in $\mcG$ is adjacent to any element in $q$. Assume $b\in q$. The function $f:\mcG\cup\{a\}\rightarrow \mcG\cup\{b\}$ mapping $\mcG$ to $\mcG$ and $a$ to $b$ is thus an embedding. Since $|\mcG|\geq k$ it is possible, by $\geq$$k-$homogeneity, to extend $f$ into an automorphism. It follows that $a\in q$ which is a contradiction.
\end{proof}
$K_\infty(a)$ is any subgraph $\mcG$ of $\mcM$ which is isomorphic to $K_\infty$ and contains $a$. Thus the following lemma proves that $b$ is adjacent to all elements (except $a$) in all $K_\infty\subseteq \mcM$ which contain $a$.
\begin{lma}\label{pnonadjlma}
If $a,b\in p$ are such that $a$ is not adjacent to $b$ then $b$ is adjacent to each element in $K_\infty(a)-\{a\}$.
\end{lma}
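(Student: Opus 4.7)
Suppose for contradiction that there exists $c\in K_\infty(a)\setminus\{a\}$ with $b\not\sim c$; since $a,c\in K_\infty(a)$ we have $a\sim c$, so $a,b,c\in p$ satisfy $a\sim c$, $a\not\sim b$ and $b\not\sim c$. The plan is to use $\geq k$-homogeneity to build an automorphism of $\mcM$ that swaps $a$ with $c$ while fixing $b$, and then to argue that iterating or combining such automorphisms forces $b$ to have strictly more than $k-1$ non-neighbors in $p$; this contradicts the preceding lemma, which bounds $|N^c(b)|\leq k+|q|-1$, of which all $|q|$ elements of $q$ already belong to $N^c(b)$ by Lemma~\ref{pqnonadj}, leaving room for at most $k-1$ non-neighbors of $b$ inside $p$.

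The construction of the automorphism is routine. By the preceding lemma, $b$ has only finitely many non-neighbors, so $K_\infty(a)\cap N(b)$ is an infinite clique; pick a clique $\mcG\subseteq K_\infty(a)\cap N(b)\setminus\{a,c\}$ of size $k-2$. The two $k$-element substructures $\mcG\cup\{a,b\}$ and $\mcG\cup\{c,b\}$ are isomorphic: each consists of a copy of $K_{k-1}$ (namely $\mcG$ joined with $a$ or $c$) together with an extra vertex $b$ that is adjacent to $\mcG$ but not to the distinguished clique vertex. The bijection sending $a\mapsto c$ and fixing $\mcG\cup\{b\}$ pointwise is an isomorphism, and $\geq k$-homogeneity extends it to $\Phi\in\mr{Aut}(\mcM)$ with $\Phi(a)=c$, $\Phi(b)=b$, and $\Phi|_\mcG=\mr{id}$.

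The heart of the argument is to exploit $\Phi$ to produce many non-neighbors of $b$. Since $\Phi(b)=b$, $\Phi$ permutes the finite set $N^c(b)$, and adjacency-preservation gives $\Phi(c)\sim c$, $\Phi(c)\not\sim b$ and $\Phi(c)\neq c$ (by injectivity, using $\Phi(a)=c$). If $\Phi(c)\neq a$, then $\Phi(c)$ is a new non-neighbor of $b$ in $p$ adjacent to $c$, and iterating the construction with $\Phi(c)$ in place of $c$ yields successively new non-neighbors of $b$ in $K_\infty(a)$, eventually exceeding $k-1$. In the remaining case $\Phi=(a\,c)$, I build an analogous involution $\Psi$ swapping $a\leftrightarrow b$, obtained from a $(k-2)$-clique inside $N(a)\cap N(b)$ (which contains an infinite clique by Ramsey, since $N(a)\cap N(b)$ is cofinite in $p$ and no infinite independent set can sit in $p$ by the non-neighbor bound); the composition $\Phi\Psi$ then moves $c$ to a new element of $N^c(b)\cap p$, and repeating the argument exhausts the bound.

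The main obstacle is the bookkeeping in the involutive case: one must verify that whenever the natural swap $\Phi$ happens to be an involution, an independent construction (swapping $a$ with $b$, or enlarging/varying the padding clique $\mcG$) genuinely produces strictly new elements of $N^c(b)\cap p$ rather than merely shuffling the two we already have, so that after finitely many iterations the bound $|N^c(b)\cap p|\leq k-1$ is violated. Once that is done, the contradiction closes and the lemma is proved.
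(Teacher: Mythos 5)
Your construction of the swap $\Phi$ (fixing $b$ and a $(k-2)$-clique in $K_\infty(a)\cap N(b)$ while sending $a$ to $c$) is correct, and you have identified the right target contradiction, namely that $b$ would acquire more than $k-1$ non-neighbors in $p$. But the mechanism by which you accumulate $k$ distinct non-neighbors is not established, and this is the heart of the proof rather than bookkeeping. Each swap only guarantees that the image of the current vertex differs from that vertex itself; it may coincide with $a$, with $c$, or with any earlier element of your list, so the iteration can cycle after producing as few as two elements and never approach $k$. Worse, the proposed repair in the involutive case fails outright: in that case $\Psi(\Phi(c))=\Psi(a)=b$, while in the other order $c\sim a$ gives $\Psi(c)\sim\Psi(a)=b$ and hence $\Phi(\Psi(c))\sim\Phi(b)=b$; so under either order of composition $c$ is sent to $b$ itself or to a \emph{neighbor} of $b$, not to a new element of $N^c(b)\cap p$. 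You yourself flag the verification that ``strictly new elements'' are produced as an open obstacle, and I do not see how to close it along these lines.

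The missing idea is to argue globally rather than one vertex at a time. The paper observes that all pairs of distinct elements of $K_\infty(a)$ lie in a single $2$-orbit (pad with $k-2$ further clique vertices), so since the pair $(a,c)$ has the common non-neighbor $b$, \emph{every} pair $\{\alpha,\beta\}\subseteq K_\infty(a)$ has a common non-neighbor $\gamma_{\alpha\beta}\in p$. There are infinitely many such pairs, while each vertex of $p$ is non-adjacent to exactly $t$ vertices of $p$; fixing $\alpha=a$ and letting $\beta$ range over $K_\infty(a)$, pigeonhole forces some single $\gamma$ to be non-adjacent to $a$ and to infinitely many $\beta$, exceeding $t$ --- a contradiction. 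If you want to keep your automorphism-theoretic framing, the fix is the same argument in disguise: transport the pair $(a,c)$ to $(a,d)$ for infinitely many $d\in K_\infty(a)$ and track where $b$ is sent, rather than trying to generate new non-neighbors of the fixed vertex $b$ by iterated swaps.
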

\begin{proof}
Assume that each element in $p$ is non-adjacent to exactly $t$ other elements in $p$. Assume in search for a contradiction that $d\in K_\infty(a)-\{a\}$ is not adjacent to $b$. All pairs of distinct elements from $K_\infty(a)$ belong to the same $2-$orbit. Thus for any distinct $\alpha,\beta\in K_\infty(a)$ there exists $\gamma\in p$ such that $\alpha$ and $\beta$ are both not adjacent to $\gamma$. This however implies either that $a$ would be non-adjacent to more than $t$ different elements or that there would exist some element $c\in p$ which is non-adjacent to more than $t$ elements in $K_\infty(a)$. Both of these conclusions lead to a contradiction since we have assumed each element in $p$ to be non-adjacent to exactly $t$ other elements in $p$.
\end{proof}
\begin{lma}\label{notadjlma}
If $a,b,c\in p$ are such that $a$ is not adjacent to $b$ and $a$ is not adjacent $c$, then $b$ is not adjacent to $c$.
\end{lma}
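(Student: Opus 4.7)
The plan is to derive a contradiction from the assumption $b\sim c$ by exploiting Lemma \ref{pnonadjlma} in two complementary ways: first to build a new infinite clique through $b$ (using $K_\infty(a)$, the vertex $c$, and the assumed edge $b\sim c$), and then to apply Lemma \ref{pnonadjlma} to that new clique to force the edge $a\sim c$, which contradicts the hypothesis $a\not\sim c$.

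In detail: I would fix any $K_\infty(a)\subseteq\mcM$ and set $K=K_\infty(a)-\{a\}$, which is itself an infinite clique. By Lemma \ref{pnonadjlma} applied to the non-adjacent pair $(a,b)$, the vertex $b$ is adjacent to every element of $K$; applied to $(a,c)$ it gives the same for $c$. Now suppose for contradiction that $b\sim c$. Then every pair inside $\{b,c\}\cup K$ is an edge, so this set is an infinite clique containing $b$, i.e.\ a legitimate choice of $K_\infty(b)$. Applying Lemma \ref{pnonadjlma} a second time to the pair $(a,b)$, but with this particular $K_\infty(b)$, we conclude that $a$ is adjacent to every element of $K_\infty(b)-\{b\}=\{c\}\cup K$. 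In particular $a\sim c$, contradicting $a\not\sim c$.

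The only point to be careful about is that Lemma \ref{pnonadjlma} quantifies over \emph{every} infinite clique through the reference vertex, so we are free to use the enlarged clique $\{b,c\}\cup K$ on the second application rather than the original $K_\infty(a)$. Once this is recognised there is no substantive combinatorial obstacle; the argument amounts to observing that the hypothetical edge $b\sim c$ produces an alternative $K_\infty$ through $b$ which is incompatible with $a$ having two distinct non-neighbours $b$ and $c$.
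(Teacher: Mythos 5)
Your proof is correct and is essentially the paper's own argument: the paper likewise assumes $b\sim c$, uses Lemma \ref{pnonadjlma} to make $b$ and $c$ adjacent to all of $K_\infty(a)-\{a\}$, forms the enlarged clique $(K_\infty(a)\cup\{b,c\})-\{a\}$ as a $K_\infty(b,c)$, and then reapplies Lemma \ref{pnonadjlma} to the non-adjacent pair $(b,a)$ to force $a\sim c$. Your explicit remark that the lemma quantifies over every infinite clique through the reference vertex is exactly the point the paper relies on.
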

\begin{proof}
Assume that $b$ is adjacent to $c$. Lemma \ref{pnonadjlma} implies that both $b$ and $c$ are adjacent to each element in $K_\infty(a)-\{a\}$. Thus $(K_\infty(a)\cup\{b,c\})-\{a\}$ is a complete graph, where both $b$ and $c$ are non-adjacent to $a$. As $b$ is not adjacent to $a$ and $c\in K_\infty(b,c)$, Lemma \ref{pnonadjlma} implies that $c$ is adjacent to $a$ which is a contradiction.
\end{proof}
\begin{lma}\label{adjlma}If $a,b\in p$ and $a$ is adjacent to $b$ then there exists $\mcG\subseteq \mcM$ such that $a,b\in \mcG$ and $\mcG\cong K_\infty$.
\end{lma}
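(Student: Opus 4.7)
The plan is to exploit the fact that non-adjacency restricted to $p$ is an equivalence relation (with finite classes), so that any clique containing $a$ and $b$ can be built by picking one representative from each class.

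First I would establish that the non-adjacency relation, extended to include the diagonal, is an equivalence relation on $p$. Reflexivity is automatic and symmetry is obvious; transitivity on distinct triples is exactly the content of Lemma \ref{notadjlma}, and the degenerate cases with coincidences among the three points are trivial. Call this equivalence relation $\sim$. Two elements of $p$ that lie in distinct $\sim$-classes are necessarily adjacent, by the very definition of $\sim$.

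Next I would observe that every $\sim$-class is finite. Indeed, by Lemma \ref{pqnonadj} every $x \in p$ is non-adjacent to every element of $q$, so Lemma 2.4 (on the total number of non-neighbours of $x$ in $\mcM$) forces $x$ to have at most $k-1$ non-neighbours inside $p \setminus \{x\}$; hence $|[x]_\sim| \leq k$. Since $p$ is infinite, it follows that there are infinitely many $\sim$-classes.

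Finally, suppose $a,b \in p$ are adjacent. They lie in different $\sim$-classes. Choose infinitely many further $\sim$-classes $C_1, C_2, \ldots$ distinct from $[a]_\sim$ and $[b]_\sim$, and pick a representative $c_i \in C_i$ for each $i$. Any two elements of $\{a,b,c_1,c_2,\ldots\}$ come from distinct $\sim$-classes and are therefore adjacent, so this set induces a subgraph $\mcG \cong K_\infty$ containing both $a$ and $b$. I do not foresee a serious obstacle here; the only point requiring care is the verification that $\sim$ is actually transitive through the degenerate equality cases, but this is immediate from the definition.
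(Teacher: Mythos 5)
Your proof is correct, but it takes a different route from the paper's. The paper works locally inside a pre-existing clique: it takes $K_\infty(b)$ and observes that if two of its vertices $c,d$ were both non-adjacent to $a$, Lemma \ref{notadjlma} would force $c$ and $d$ to be non-adjacent, a contradiction; hence $a$ misses at most one vertex of $K_\infty(b)$ and the clique through $a$ and $b$ is obtained by adjoining $a$ to the infinitely many vertices it does see. You instead argue globally: you use Lemma \ref{notadjlma} to show that non-adjacency (together with equality) is an equivalence relation on $p$, use Lemma \ref{pqnonadj} and the unlabelled lemma bounding the number of non-neighbours to see that each class has at most $k$ elements, deduce that there are infinitely many classes, and build the clique from scratch as $\{a,b\}$ together with a transversal of the remaining classes. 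Both arguments hinge on Lemma \ref{notadjlma}; yours additionally needs the finiteness of the classes but in exchange does not need to start from an existing $K_\infty(b)$, and it essentially anticipates the structural conclusion $p\cong\mcG_t$ that the paper only extracts later in Lemma \ref{fin1lma}. The paper's version is shorter and more economical in its hypotheses; yours gives more structural information per unit of work and would let the subsequent lemma be read off almost immediately.
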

\begin{proof}
If $a$ is adjacent to an infinite amount of elements in some subgraph $\mcG\subseteq \mcM$ such that $\mcG\cong K_\infty$ and $b\in \mcG$, then the lemma holds. Assume, in search for a contradiction, that there exist elements $c,d\in K_\infty(b)$ such that $a$ is not adjacent to both $c$ and $d$. Lemma \ref{notadjlma} then implies that $c$ is not adjacent to $d$, which is a contradiction.
\end{proof}
We now summarize all our knowledge about $p$ and $q$ into the following lemma which proves the second part of this section's main Lemma \ref{thm1}.
\begin{lma}\label{fin1lma}For some $t\in \mbbZ^+$, $p\cong \mcG_t$ and $\mcM\cong \mcG_t\dot\cup \mcH$ for some homogeneous finite graph $\mcH$.
\end{lma}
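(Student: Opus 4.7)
The plan is to extract the structure of $p$ from the three non-adjacency lemmas and then deduce that $q$, being a finite $1$-orbit disconnected from $p$, must itself be homogeneous.

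First, I would define a relation $\sim$ on $p$ by declaring $a \sim b$ iff $a=b$ or $a$ is not adjacent to $b$. Reflexivity and symmetry are immediate, and transitivity is exactly the content of Lemma \ref{notadjlma}. Thus $\sim$ is an equivalence relation on $p$. By the unlabelled bound lemma preceding Lemma \ref{pnonadjlma}, each $a\in p$ is non-adjacent to at most $k+|q|-1$ vertices of $\mcM$; since by Lemma \ref{pqnonadj} exactly $|q|$ of those lie in $q$, $a$ has at most $k-1$ non-neighbours inside $p$, so its $\sim$-class contains at most $k$ elements. Because $\geq k$-homogeneity acts transitively on the $1$-orbit $p$, all $\sim$-classes have the same (finite) size, call it $t$; and since $p$ is infinite while classes are finite, there are $\omega$ many of them. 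By construction two elements in distinct classes are adjacent, so $p$ as an induced subgraph of $\mcM$ is the complement of a disjoint union of $\omega$ copies of $K_t$, i.e.\ $p\cong\mcG_t$.

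It remains to show that $q$ is a homogeneous graph and that $\mcM\cong\mcG_t\dot\cup q$. The second statement is immediate: $\mcM$ is the disjoint union of the two $1$-orbits $p$ and $q$, and Lemma \ref{pqnonadj} ensures no edge crosses between them. For the first, let $f:\mcA\to\mcB$ be any embedding between induced subgraphs of $q$. Choose any $k$ vertices $c_1,\dots,c_k\in p$; by Lemma \ref{pqnonadj} each $c_i$ is non-adjacent to every vertex of $q$, so extending $f$ by $f(c_i)=c_i$ yields an embedding $f':\mcA\cup\{c_1,\dots,c_k\}\to \mcB\cup\{c_1,\dots,c_k\}$ whose domain has at least $k$ elements. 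By $\geq k$-homogeneity $f'$ extends to an automorphism $\sigma$ of $\mcM$; since $\sigma$ must permute $1$-orbits setwise, its restriction to $q$ is an automorphism of $q$ extending $f$.

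The main technical point is verifying that $\sim$ is genuinely an equivalence relation with a uniform, finite class size on $p$; once this is in place, identifying $p$ with $\mcG_t$ and recognising $q$ as a homogeneous component fall out. I do not expect further obstacles, since the homogeneity of $q$ reduces to a clean padding argument using vertices of $p$ as a non-adjacent buffer, entirely in the spirit of the earlier arguments in this section.
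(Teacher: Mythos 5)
Your proof is correct and follows essentially the same route as the paper: Lemma \ref{notadjlma} gives the partition of $p$ into finite independent classes of uniform size (you package this as transitivity of an explicit equivalence relation, whereas the paper additionally cites Lemmas \ref{pnonadjlma} and \ref{adjlma}, but the content is the same), and your padding argument with $k$ fixed vertices of $p$ to show $q$ is homogeneous is exactly the paper's.
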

\begin{proof}Assume that each element $a\in p$ is non-adjacent to $t$ elements $b_1,\ldots, b_t$ $\in p$. By Lemma \ref{notadjlma} these $t$ elements always form a $K_t^c$. If $c\in p$ is adjacent to $b_i$, for some $i\in \{1,\ldots,t\}$, then Lemma \ref{pnonadjlma} and Lemma \ref{adjlma} implies that $c$ is adjacent to all of $b_1,\ldots,b_t$. It is thus clear that $p\cong \mcG_t$.\\\indent
Let $q'\subseteq q$, let $g : q'\to q$ be any embedding and choose $p'\subseteq p$ such that $|p'|=k$. The function $f:p'\cup q'\to \mcM$ which maps $p'$ by inclusion to $p'$ and $q'$ according to $g$ to $q$ is then an embedding since, by Lemma \ref{pqnonadj}, elements in $p$ and $q$ are not adjacent. As $\mcM$ is $\geq$$k-$homogeneous and $|p'\cup q'|\geq k$, $f$ is possible to extend into an automorphism $f'$ of $\mcM$. Now $f'$ maps $q$ to $q$, thus if we restrict $f'$ to $q$, we get an automorphism of $q$ which by definition extends $g$. Hence we have shown that $q$ satisfies the definition of being homogeneous and we can conclude that $\mcM\cong \mcG_t \dot\cup \mcH$ for some finite homogeneous graph $\mcH$.
\end{proof} 
Using the tools we have developed in this section, we can now finally prove the main lemma. Note that we do not use any other assumptions than those stated in the formulation of the lemma.
\begin{proof}[Proof Lemma \ref{thm1}]
Lemma \ref{fin1lma} proves that if $\mcM$ is $\geq$$k-$homogeneous and $K_\infty\subseteq \mcM$ then $\mcM \cong \mcG_t\dot\cup \mcH$ for some $t\in\mbbN$ and homogeneous finite graph $\mcH$. It follows that, using Ramsey's theorem, if $K_\infty\not\subseteq \mcM$ then $K_\infty^c\subseteq \mcM$ in which case it follows similarly that $\mcM\cong (\mcG_t\dot\cup \mcH)^c$.\\\indent 
In order to prove the second direction assume that $\mcM\cong \mcG_t\dot\cup \mcH$ and notice that both $\mcG_t$ and $\mcH$ are homogeneous graphs and as subgraphs of $\mcM$ they constitute distinct $1-$orbits which are not connected to each other. Let $\mcG'\subseteq \mcM$ be a finite subgraph such that $ |\mcG'| \geq k = 3\max(|\mcH|,t)$. 
There are at least $2t$ vertices in $\mcG'$ which belong to $\mcG_t$, thus there exists a vertex $a\in \mcG'$ which is adjacent to at least $|\mcH|$ vertices in $\mcM$. Furthermore if $b\in \mcG_t$ then $b$ is adjacent either to $a$ or an element in $\mcG'$ which $a$ is adjacent to. This implies that $\mcG'\cap \mcG_t$ has to consist of more than $|\mcH|$ vertices  which are all in a single connected component. Thus any embedding $f:\mcG'\rightarrow \mcM$ has to map $\mcG'\cap \mcG_t$ into $\mcG_t$ and $\mcG'\cap \mcH$ into $\mcH$. As $\mcH$ and $\mcG_t$ are both homogeneous this means that $f$ can be extended to an automorphism of $\mcM$, thus $\mcM$ is $\geq$$k-$homogeneous. 
\end{proof}

\section{Graphs which are $1-$homogeneous but not $2-$homogeneous}\label{sec2}
\begin{lma}\label{twothm}
A countably infinite graph $\mcM$ is $\geq$$k-$homogeneous, for some $k\in\mbbN$, $1-$homogeneous but not $2-$homogeneous if and only if there exists $t$ such that $\mcM \cong \mcH_{t,1}$, $\mcM\cong \mcH_{t,2}$, $\mcM\cong \mcH_{t,1}^c$ or $\mcM\cong\mcH_{t,2}^c$.
\end{lma}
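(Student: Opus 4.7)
The backward direction is by direct verification. For each of $\mcH_{t,1}, \mcH_{t,2}, \mcH_{1,2}$ and their complements I would exhibit enough automorphisms — permutations of the $\mbbZ$-coordinate, permutations of the second-coordinate indices respecting the two `halves' $\{1,\dots,t\}$ and $\{t+1,\dots,2t\}$, and the swap of the two halves where applicable — to see $1$-homogeneity, and then exhibit two non-conjugate unordered $2$-subsets to see that $2$-homogeneity fails (a non-edge inside a `block' versus one across halves in the $\mcH_{t,j}$ cases, and a within-row edge versus a matching-edge in $\mcH_{1,2}$). For $\geq k$-homogeneity with $k$ of the size recorded in Corollary \ref{twocor}, I would run a back-and-forth: once $|\mcG'|\geq k$, any embedding $\mcG'\to\mcM$ is forced to send `blocks' to `blocks' and `halves' to `halves' — both are detectable inside a sufficiently large induced subgraph — and the infinite $\mbbZ$-axis then provides room to extend one vertex at a time.

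For the forward direction, assume $\mcM$ is $\geq k$-homogeneous, $1$-homogeneous, and not $2$-homogeneous. By Ramsey's theorem I may assume $K_\infty\subseteq\mcM$; otherwise the same analysis applied to $\mcM^c$ gives the classification and I complement back. Fix $v\in\mcM$ and consider the finite partition of $\mcM\setminus\{v\}$ into orbits under the stabilizer of $v$ in $\Aut(\mcM)$: non-$2$-homogeneity forces at least three $\Aut(\mcM)$-orbits of $2$-subsets, while $\geq k$-homogeneity bounds the number of stabilizer-orbits. The central technical claim is that there exists a $2$-orbit $P$ such that each vertex has only finitely many $P$-partners and such that the reflexive and symmetric relation defined by $a\sim b$ iff $a=b$ or $\{a,b\}\in P$ is transitive; by $1$-homogeneity the $\sim$-classes all then have the same finite size $t\geq 1$. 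Transitivity is the main obstacle, and is proved by the contradiction method of Section \ref{sec1}: if $a\sim b$, $b\sim c$, $a\not\sim c$, I construct two configurations of size $\geq k$ that are abstractly isomorphic as graphs but lie in different $\Aut(\mcM)$-orbits, in the style of Lemmas \ref{pnonadjlma}--\ref{adjlma}.

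With $\sim$ in hand, the remaining work is to determine the induced structure on a single $\sim$-class and the induced bipartite structure between two distinct classes. The restriction of $\mcM$ to a single class is homogeneous by $\geq k$-homogeneity, hence equal to $K_t$ or $K_t^c$; the number of classes is infinite, forced by $K_\infty\subseteq\mcM$ together with finiteness of each class. Between two distinct classes the induced bipartite graph must, by $\geq k$-homogeneity and the remaining $\Aut(\mcM)$-orbits, be one of four options — empty, complete bipartite, a perfect matching, or the bipartite complement of a matching — and these patterns either occur uniformly for all pairs of classes, or in two flavours governed by a second, coarser $\Aut(\mcM)$-invariant equivalence splitting the classes into two `halves'. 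A short case analysis matches these possibilities to the four families $\mcH_{t,1}, \mcH_{t,1}^c, \mcH_{t,2}, \mcH_{t,2}^c$, with the sporadic $\mcH_{1,2}$ arising exactly when $P$ is an edge-orbit and $t=1$; this last case is the only place where $P$ is an edge-orbit rather than a non-edge-orbit and must be treated separately, though the argument is parallel.
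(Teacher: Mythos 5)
Your proposal follows essentially the same route as the paper: Ramsey to reduce to the case $K_\infty\subseteq\mcM$, a bound on the number of $2$-orbits, identification of an exceptional $2$-orbit ($p_2$, or $q_2$ in the $\mcH_{1,2}$ case) whose partner-sets are finite and which generates a transitive ``block'' relation, and then a case analysis on which exceptional orbits exist to recover the two halves and the four families; your backward direction likewise matches the paper's verification that large subgraphs are forced to respect blocks and halves. The only caveat is one of detail rather than approach: the paper's Section~\ref{sec2} needs not just finiteness and transitivity of the one orbit but a full suite of composition lemmas (Lemmas~\ref{lma6a}--\ref{lma9}) relating all four orbits to pin down the two-halves structure, and your ``short case analysis'' is in fact the bulk of the work.
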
 The proof is left for the end of this section.
In order to prove the second direction of Lemma \ref{twothm}, we will assume throughout the rest of this section that $\mcM$ is $\geq$$k-$homogeneous, $1-$homo\-ge\-ne\-ous but not $2-$homogeneous i.e. there are more than three $2-$orbits but only a single $1-$orbit. Due to Ramsey's theorem $\mcM$ has to contain either $K_\infty$ or $K_\infty^c$. We will assume that $\mcM$ contains $K_\infty$ and the reader may notice that all the reasoning in this section may be done in the same way for $K_\infty^c$ by switching all references to edges and non-edges. Since there is only a single $1-$orbit, writing $K_\infty(a)$ always makes sense for any vertex $a\in \mcM$, while writing $K_\infty(a,b)$ needs to be motivated in order to show existence.
\begin{lma}
There are at most two $2-$orbits containing tuples of adjacent elements in $\mcM$ and there are at most two $2-$orbits containing tuples of distinct non-adjacent elements in $\mcM.$
\end{lma}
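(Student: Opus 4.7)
The plan is to prove both halves by identifying, for every ordered pair $(a,b)$ of distinct elements of $\mcM$, a single binary invariant preserved by automorphisms. For the adjacent case the invariant is whether $a$ and $b$ lie inside a common infinite clique $K_\infty(a,b)$; for the non-adjacent case it is whether some $K_\infty(a)$ contains infinitely many neighbours of $b$. In each of the four resulting subcases I will produce, for any two pairs $(a,b),(a',b')$ of the same type, a partial isomorphism of size $k$ sending $a\mapsto a'$ and $b\mapsto b'$, and invoke $\geq$$k-$homogeneity to extend it to an automorphism of $\mcM$, so that pairs sharing a type belong to the same $2$-orbit.

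For adjacent pairs in a common $K_\infty$, I pick $k-2$ further vertices inside $K_\infty(a,b)$ and inside $K_\infty(a',b')$, obtaining two copies of $K_k$; the vertex correspondence $a\mapsto a'$, $b\mapsto b'$ and arbitrary elsewhere is automatically an embedding. For adjacent pairs not sharing a $K_\infty$, the vertex $b$ must be adjacent to only finitely many elements of any given $K_\infty(a)$, since otherwise those infinitely many vertices together with $a$ and $b$ would span a $K_\infty(a,b)$, contradicting the case. Hence $K_\infty(a)$ contains infinitely many vertices distinct from $a$ and non-adjacent to $b$, from which I select $c_1,\ldots,c_{k-2}$. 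The induced subgraph on $\{a,b,c_1,\ldots,c_{k-2}\}$ has $a$ adjacent to every other vertex, the $c_i$ pairwise adjacent, and $b$ adjacent only to $a$; the analogous construction at $(a',b')$ gives a matching $k$-element subgraph, and the obvious bijection $a\mapsto a'$, $b\mapsto b'$, $c_i\mapsto c_i'$ is an embedding.

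The non-adjacent case is treated symmetrically. If some $K_\infty(a)$ contains infinitely many neighbours of $b$, I pick $c_1,\ldots,c_{k-2}$ from that intersection, and then $\{a,b,c_1,\ldots,c_{k-2}\}$ is isomorphic to $K_k$ with only the edge $ab$ removed. If instead every $K_\infty(a)$ meets the neighbours of $b$ in a finite set, then every $K_\infty(a)$ contains infinitely many vertices non-adjacent to $b$, from which I select $c_1,\ldots,c_{k-2}$; in this case the induced subgraph has $a$ adjacent to each $c_i$, the $c_i$ pairwise adjacent, and $b$ non-adjacent to every other vertex. Each subcase again matches the analogous construction at $(a',b')$, producing the required size-$k$ embedding.

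The main obstacle is isolating the correct invariants and verifying their automorphism invariance; this is straightforward since automorphisms send any $K_\infty(a)$ to a $K_\infty(\sigma(a))$ and preserve the neighbours of $b$. Once the dichotomies are in place and matched with the local structures above, the partial-isomorphism-and-extend step is uniform across the four subcases, yielding at most two $2$-orbits of adjacent pairs and at most two $2$-orbits of distinct non-adjacent pairs.
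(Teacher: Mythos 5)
Your proof is correct and follows essentially the same route as the paper: both arguments hinge on the dichotomy of whether the second vertex has many or few neighbours in an infinite clique through the first, and both conclude by extending a size-$k$ partial embedding built from clique vertices with controlled adjacency to the outside vertex. The only difference is presentational — you show directly that each of the two invariant classes lies in a single orbit, while the paper assumes three orbits and collapses two of them (and uses a finite $K_{2k}(c)$ pigeonhole in place of your infinite-neighbourhood dichotomy for the non-adjacent case).
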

\begin{proof}
Assume $(a,b_1)$, $(a,b_2)$ and $(a,b_3)$ are three different $2-$orbits such that $a$ is adjacent to $b_1,b_2$ and $b_3$. We may assume that $a$ is the first coordinate of all three parts without loss of generality, since we only have a single $1-$orbit in $\mcM$. One of the $2-$orbits may be assumed to be a part of a $K_\infty$, say $(a,b_1)$. Since $\mcM$ is $\geq$$k-$homogeneous this property is unique for the orbit of $(a,b_1)$. But then neither $b_2$ nor $b_3$ may be adjacent to more than $k-1$ of the elements in $K_{3k}(a,b_1)$. Thus we are able to find $\mcG\subseteq K_{3k}(a,b_1)$ such that $a\in \mcG$, $|G|\geq k$ and nothing in $\mcG$, except $a$, is adjacent to $b_2$ or $b_3$. The function $f:\mcG\cup\{b_1\}\rightarrow \mcG\cup\{b_3\}$ mapping $\mcG$ and $b_2$ to $\mcG$ and $b_3$ is an embedding, and hence the $\geq$$k-$homogeneity implies that $(a,b_2)$ and $(a,b_3)$ are of the same orbit, contradicting the assumption.
\\\indent
For the second part of the lemma, assume $(c,d_1)$, $(c,d_2)$ and $(c,d_3)$ are different $2-$orbits such that $c$ is non-adjacent to all of $d_1,d_2$ and $d_3$. Since the orbits are different the $\geq$$k-$homogeneity implies that $d_1$ is adjacent to at least $k$ of the vertices in $K_{2k}(c)$ if and only if $d_2$ is adjacent to at most $k-1$ vertices in $K_{2k}(c)$. However, $d_3$ has to be adjacent or non-adjacent to at least $k$ vertices in $K_{2k}(c)$ and hence the orbit of $(c,d_3)$ can't be distinct from the two other orbits by the $\geq$$k-$homogeneity of $\mcM$.
\end{proof}
The previous lemma implies that we may assume there are at most five $2-$orbits in $\mcM$, out of which one is the orbit containing identical element $2-$tuples $(x,x)$. Call the $2-$orbits where elements have an edge between them, $q_1$ and $q_2$ and assume that $q_1$ is the orbit of pairs of elements in $K_\infty$. It follows that $(a,b)\in q_2$ if $a$ is adjacent to less than $k-1$ elements in $K_\infty(b)$ and $a$ is adjacent to $b$.
\begin{lma}\label{qfinlma} For each $a\in \mcM$, there are only finitely many (possibly zero) elements $b\in \mcM$ such that $(a,b)\in q_2$.
\end{lma}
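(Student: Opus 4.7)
My approach is to argue by contradiction: assuming $S_a := \{b \in \mcM : (a,b) \in q_2\}$ is infinite, I will construct an automorphism of $\mcM$ that carries a $q_2$-pair to a $q_1$-pair.

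The first step is to apply Ramsey's theorem to $S_a$ viewed as a subgraph of $\mcM$. If $S_a$ contains an infinite clique $C$, then $\{a\} \cup C$ is itself an infinite clique (since $a$ is adjacent to every element of $S_a$), and hence $(a,c) \in q_1$ for every $c \in C$, contradicting $c \in S_a$. So I may instead assume that $S_a$ contains an infinite independent set $D = \{d_1, d_2, \ldots\}$, where every pair $d_i, d_j$ (for $i \neq j$) is non-adjacent and every $(a, d_i)$ lies in $q_2$.

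The key quantitative input is that for any $b$ with $(a,b) \in q_2$, the vertex $b$ can have at most $k-2$ neighbors in any infinite clique $K_\infty(a)$ through $a$. Indeed, were $c_1, \ldots, c_{k-1}$ such neighbors, then $\{a, b, c_1, \ldots, c_{k-1}\}$ would form a $K_{k+1}$, which by $\geq$$k$-homogeneity could be embedded into the $K_\infty \subseteq \mcM$ and so would force $(a,b) \in q_1$. Fixing a specific $C_a = K_\infty(a)$ and picking any $d_1, \ldots, d_{k-2} \in D$, the union $\bigcup_{j=1}^{k-2} (N(d_j) \cap C_a)$ has at most $(k-2)^2$ elements, so I can choose $y \in C_a$ distinct from $a$ and non-adjacent to every $d_j$. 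Since $y$ and $a$ both lie in $C_a$, the pair $(a, y)$ lies in $q_1$.

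The induced subgraph on the $k$-element set $\{a, d_1, \ldots, d_{k-2}, y\}$ is then the star $K_{1, k-1}$ with centre $a$, and the transposition $f$ that swaps $d_1$ with $y$ and fixes the remaining vertices is a graph automorphism of this subgraph. By $\geq$$k$-homogeneity $f$ extends to an automorphism $F$ of $\mcM$, but then $F$ sends $(a, d_1) \in q_2$ to $(a, y) \in q_1$, which is absurd. The step I would flag as the main obstacle is locating the right witness $y$: it must behave like a $q_2$-leaf in the star yet actually be a $q_1$-neighbor of $a$, and this is possible precisely because each $d_j$, having only boundedly many neighbors inside $K_\infty(a)$, leaves the bulk of $C_a$ available.
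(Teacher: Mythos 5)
Your proof is correct and follows essentially the same strategy as the paper: apply Ramsey's theorem to the set of $q_2$-partners of $a$, dispose of the infinite-clique case by noting it would force those pairs into $q_1$, and in the independent-set case use the bounded number of neighbours each such vertex has in $K_\infty(a)$ to find a vertex $y\in K_\infty(a)$ non-adjacent to the chosen independent vertices, then swap it with one of them via $\geq$$k$-homogeneity to contradict the distinctness of $q_1$ and $q_2$. The only cosmetic difference is that you work with a $k$-element star and a transposition where the paper uses a $(k+1)$-element configuration $K_k^c\cup\{a\}$ and replaces $b$ by $c$.
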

\begin{proof}
Let $\mcA_a\subseteq \mcM$ be the subgraph containing all elements $b$ such that $(a,b)\in q_2$ and assume in search for a contradiction that $\mcA_a$ is infinite. 
By Ramsey's theorem either $K_k\subseteq \mcA_a$ or $K_k^c\subseteq\mcA_a$. If $K_k\subseteq \mcA_a$ then, since $a$ is adjacent to each element in $\mcA_a$, for any $b\in K_k\subseteq \mcA_a$, $(a,b)\in q_1$ which is a contradiction against that $(a,b)\in q_2$.
\\\indent On the other hand assume that $K_k^c\subseteq \mcA_a$ and $b\in K_k^c$. Each element in $\mcA_a$ is adjacent to less than $k-1$ elements in $K_\infty(a)$, thus there exists a vertex $c\in K_\infty(a)$ such that none of the elements in $K_k^c\subseteq \mcA_a$ is adjacent to $c$. The function $f:K_k^c\cup\{a\}\rightarrow (K_k^c-\{b\})\cup\{a,c\}$ mapping $(K_k^c-\{b\})\cup\{a\}$ back to itself pointwise and $b$ to $c$ is then an embedding. Thus $\geq$$k-$homogeneity implies that $(a,b)\in q_1$, which is a contradiction.
\end{proof}
Call the $2-$orbits of tuples of distinct elements which have no edge between them $p_1$ and $p_2$. Assume $p_1$ is the orbit of pairs $(a,b)$ such that $b$ is adjacent to at most $k-1$ of the elements in $K_\infty(a)$. We note that $p_1$ has to exist, since else $q_2$ can't exist, which would imply that $\mcM$ has at most three $2-$orbits. It follows, using the $\geq$$k-$homogeneity, that each element $b$ which is non-adjacent to at least $k-1$ elements in $K_\infty(a)$ is such that $(a,b)\in p_1$. Thus the orbit $p_2$ contains all pairs $(a,b)$ such that $a$ and $b$ are non-adjacent yet $b$ is non-adjacent to less than $k-1$ elements in $K_\infty(a)$. It follows quickly from the definition, and the $\geq$$k-$homogeneity, that the four orbits $p_1,p_2,q_1$ and $q_2$ are symmetric in the sense that $(a,b)\in r$ implies $(b,a)\in r$.
\begin{lma}\label{lmaE} Let $a,b,c\in \mcM$. If $(a,b)\in q_1, (a,c)\in p_1$ and $b$ is not adjacent to $c$ then $(b,c)\in p_1$.
\end{lma}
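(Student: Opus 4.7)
The plan is to exploit the double role played by the infinite clique that witnesses $(a,b)\in q_1$: this clique is simultaneously a $K_\infty(a)$ and a $K_\infty(b)$, which lets us transfer the ``few adjacencies'' property of $c$ from $a$'s side to $b$'s side.

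Concretely, since $(a,b)\in q_1$, by the defining feature of $q_1$ there is an infinite clique $\Gamma\subseteq\mcM$ containing both $a$ and $b$. Applying the characterisation of $p_1$ from the paragraph just before the lemma to the pair $(a,c)$, using this particular $\Gamma$ as our $K_\infty(a)$, we find that $c$ is adjacent to at most $k-1$ vertices of $\Gamma$. Next I would reinterpret the same $\Gamma$ as a $K_\infty(b)$. Since $b$ is not adjacent to $c$ by hypothesis, the pair $(b,c)$ lies in $p_1\cup p_2$. If $(b,c)\in p_2$, then the characterisation of $p_2$ forces $c$ to be non-adjacent to fewer than $k-1$ elements of $\Gamma$, hence adjacent to cofinitely many of them; in the infinite clique $\Gamma$ this contradicts the bound just derived. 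Therefore $(b,c)\in p_1$.

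The argument essentially collapses to a one-line observation once the dual interpretation of $\Gamma$ is in place, so there is no real obstacle. The only point to be careful about is that the orbit characterisations of $p_1$ and $p_2$ may legitimately be evaluated on our chosen $\Gamma$ rather than on some abstract $K_\infty(a)$ or $K_\infty(b)$; this is fine because each of $p_1$ and $p_2$ is a single $2$-orbit, so the characterising inequality holds uniformly across every representative infinite clique through the first coordinate, in particular through $\Gamma$.
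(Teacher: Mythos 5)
Your proof is correct and follows essentially the same route as the paper: the paper's one-line argument likewise uses the infinite clique $K_\infty(a,b)$ witnessing $(a,b)\in q_1$ and observes that $(a,c)\in p_1$ forces $c$ to be adjacent to at most $k-1$ of its elements, which, read as a statement about $K_\infty(b)$, places $(b,c)$ in $p_1$. Your extra step ruling out $p_2$ explicitly is just an unpacking of the same observation.
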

\begin{proof}
Since $(a,c)\in p_1$, $c$ is adjacent to at most $k-1$ elements in $K_\infty(a,b)$, thus $(b,c)\in p_1$.
\end{proof}
We are now ready to prove that, similarly to $q_2$, the orbit $p_2$ is finite if we fix one component.
\begin{lma}\label{lma5}
For each $a\in \mcM$ there are only finitely many (possibly zero) elements $b\in \mcM$ such that $(a,b)\in p_2$.
\end{lma}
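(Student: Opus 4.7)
The plan is to argue by contradiction, in the spirit of Lemma~\ref{qfinlma}. Suppose $\mcA_a:=\{b\in\mcM:(a,b)\in p_2\}$ is infinite. Ramsey's theorem applied inside $\mcA_a$ yields either $K_\infty\subseteq\mcA_a$ or $K_\infty^c\subseteq\mcA_a$, and I treat the two cases separately.

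Suppose first that $K_\infty\subseteq\mcA_a$ and fix $k+1$ vertices $b_1,\dots,b_k,b_{k+1}$ of this infinite clique. The transposition $b_1\leftrightarrow b_{k+1}$ fixing $b_2,\dots,b_k$ is a graph embedding of the $(k+1)$-clique $\{b_1,\dots,b_{k+1}\}$ into itself and, by $\geq$$k$-homogeneity, extends to an automorphism $\sigma$ of $\mcM$. Since every $b_i\in\mcA_a$, each pair $(a,b_i)\in p_2$, so the images $(\sigma(a),\sigma(b_i))$ all lie in $p_2$; in particular $\sigma(a)$ is non-adjacent to all of $b_1,\dots,b_{k+1}$. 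Choosing $K_\infty(b_2)$ to be the infinite clique supplied by the case assumption, $\sigma(a)$ is then non-adjacent to at least $k$ elements of $K_\infty(b_2)$ other than $b_2$. This contradicts the characterization of $(\sigma(a),b_2)\in p_2$, which permits strictly fewer than $k-1$ such non-adjacencies.

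Suppose next that $K_\infty^c\subseteq\mcA_a$ and fix pairwise non-adjacent $b_1,b_2,\dots\in\mcA_a$. I first rule out $(b_i,b_j)\in p_1$. Let $T_{b_1}\subseteq K_\infty(a)$ be the set of elements non-adjacent to $b_1$; by $(a,b_1)\in p_2$ we have $|T_{b_1}|\leq k-1$, so $\{b_1\}\cup(K_\infty(a)\setminus T_{b_1})$ is an infinite clique containing $b_1$, which extends to a max clique $K_\infty(b_1)$. Since each $b_j$ with $j\neq 1$ also lies in $\mcA_a$, it is adjacent to all but at most $k-2$ elements of $K_\infty(a)\setminus\{a\}$, hence adjacent to infinitely many elements of this $K_\infty(b_1)$; this rules out $(b_1,b_j)\in p_1$, which would permit at most $k-1$ adjacencies. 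Hence every $(b_i,b_j)$ lies in $p_2$.

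To complete Case~2, I would exhibit a vertex $d$ with $(a,d)\in p_1$ (such $d$ exists because $p_1$ is a non-trivial orbit in this section's setting) that is simultaneously non-adjacent to all of $b_1,\dots,b_{k-1}$. The map $b_k\mapsto d$ fixing $a,b_1,\dots,b_{k-1}$ is then an embedding of the $(k+1)$-anticlique $\{a,b_1,\dots,b_k\}$ into the anticlique $\{a,b_1,\dots,b_{k-1},d\}$; it extends, by $\geq$$k$-homogeneity, to an automorphism of $\mcM$ mapping $(a,b_k)\in p_2$ to $(a,d)\in p_1$, contradicting distinctness of orbits. I expect the main obstacle to be the construction of $d$: one must argue, combining the infinitude of $P_1(a):=\{d:(a,d)\in p_1\}$ (guaranteed by $1$-homogeneity together with non-triviality of $p_1$) with the $\geq$$k$-homogeneity action on $k$-anticliques containing $a$, that such a common non-neighbor is available --- likely via a further pigeonhole/Ramsey analysis of how $P_1(a)$ interacts with the infinite anticlique $\{b_1,b_2,\dots\}$.
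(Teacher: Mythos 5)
Your Case 1 is correct, though the automorphism $\sigma$ is an unnecessary detour: since each $(a,b_i)\in p_2$, the vertex $a$ itself is already non-adjacent to all of $b_1,\dots,b_{k+1}$, hence to at least $k-1$ elements of the clique $K_\infty(b_2)$ supplied by the case hypothesis, which places $(a,b_2)$ in $p_1$ directly. That is exactly the paper's one-line treatment of this case.

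Case 2 has a genuine gap, and it sits exactly where you suspect: the construction of $d$. No pigeonhole or Ramsey argument will produce an element $d$ with $(a,d)\in p_1$ having $k-1$ non-neighbors among $b_1,b_2,\dots$, because under the contradiction hypothesis nothing forces any element of $P_1(a)$ to have many non-neighbors in the anticlique; a priori every such element could be adjacent to all but a bounded number of the $b_i$. (Your claim that $P_1(a)$ is infinite is also unjustified: $1$-homogeneity together with non-emptiness of the orbit $p_1$ only gives $P_1(a)\neq\emptyset$.) The paper's proof makes precisely this dichotomy explicit. Fixing one $c$ with $(a,c)\in p_1$, either $c$ is non-adjacent to at least $k-2$ elements of the anticlique, in which case the anticlique-swapping embedding you describe finishes the proof, or $c$ is non-adjacent to at most $k-3$ of them, and this second horn is the substantive case. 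There the paper extracts $2k$ elements $\mcH$ of the anticlique all adjacent to $c$, finds $e\in K_\infty(a)$ adjacent to all of $\mcH$, and builds an embedding $\mcH\cup\{c\}\to\mcH\cup\{a,e\}$ sending $(b,c)$ to $(a,e)$, which forces $(b,c)\in q_1$; Lemma~\ref{lmaE} then converts this into $(a,b)\in p_1$, contradicting $(a,b)\in p_2$. Your proposal contains no counterpart of this second horn, so as written it proves the lemma only under an additional unproved hypothesis. (Your paragraph showing $(b_i,b_j)\in p_2$ is plausible but is never used in your endgame and can be dropped.)
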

\begin{proof}
Assume $c$ is such that $(a,c)\in p_1$, let $L$ be the set of all elements $b\in \mcM$ such that $(a,b)\in p_2$ and assume that $L$ is infinite. By Ramsey's theorem, we either have $K_\infty\subseteq L$ or $K_\infty^c\subseteq L$. \\\indent If $K_\infty\subseteq L$ then all these elements are non-adjacent to $a$ but then the definition of $p_1$ implies that $(a,b')\in p_1$ for each $b'\in K_\infty\subseteq L$. This is a contradiction against $(a,b')\in p_2$. 
\\\indent
Assume instead that $K_{\infty}^c\subseteq L$ and let $\mcG\subseteq K_\infty^c\subseteq L$ be such that all elements in $\mcG$ are non-adjacent to $c$. If $|\mcG|\geq k-2$ then any injective function $f:\mcG\cup\{a,c\}\rightarrow \mcG\cup\{a,c\}$ is an embedding, thus $\geq$$k-$homogeneity implies that $(a,c)$ is in the same orbit as $(a,d)$ for any $d\in \mcG$. This is a contradiction, since $(a,d)\in p_2$ and $(a,c)\in p_1$, thus $|\mcG|\leq k-3$. 
We can hence find $\mcH\subseteq K_\infty^c\subseteq L$ such that $|\mcH|=2k$ and $c$ is adjacent to all elements in $\mcH$. All elements in $\mcH\subseteq L$ are, by the definition of $L$, non-adjacent to at most $k-2$ elements in $K_\infty(a)$, thus there exists an element $e\in K_\infty(a)$ such that $e$ is adjacent to all elements in $\mcH$.
Assume without loss of generality that $b\in \mcH$. There are embeddings $g:\mcH\cup\{c\}\rightarrow \mcH\cup\{a,e\}$ which map $(b,c)$ to $(a,e)$. This, together with $\geq$$k-$homogeneity, implies that $(b,c)\in q_1$. Lemma \ref{lmaE} together with $(b,c)\in q_1$, $(a,c)\in p_1$ and $(a,b)\in p_2$ implies that $(a,b)\in p_1$ which is a contradiction.
\end{proof}

The next lemma shows that the orbits $q_1$ and $p_2$, in some sense, are closed and together form a tight part of the graph $\mcM$. This is a vital property which will be used many times in order to handle $q_1$ and $p_2$ in the rest of the section. 
\begin{lma}\label{lma6a} Let $a,b,c\in \mcM$. If $(a,b)\in q_1$ and $(a,c)\in p_2$ then $(b,c)\in q_1$.
\end{lma}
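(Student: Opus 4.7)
The plan is to establish $(b,c) \in q_1$ in two steps: first that $b$ is adjacent to $c$, and then that $b$ and $c$ lie in a common infinite clique. Let $K = K_\infty(a,b)$, which exists by $(a,b) \in q_1$. Since $(a,c) \in p_2$, the vertex $c$ is non-adjacent to at most $k-2$ elements of $K$ (viewing $K$ as a $K_\infty(a)$), so the set $S := \{x \in K : x \sim c\}$ is cofinite in $K$, and in particular infinite.

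For the first step, suppose for contradiction that $b \not\sim c$, so $(b,c) \in p_1 \cup p_2$. The case $(b,c) \in p_1$ is ruled out immediately: since $K$ is also a $K_\infty(b)$ and $c$ is adjacent to cofinitely many of its elements, the defining condition for $p_1$ (that $c$ be adjacent to at most $k-1$ elements of $K_\infty(b)$) fails. So $(b,c) \in p_2$. Pick $d_1, \ldots, d_{k-1} \in S \setminus \{a,b\}$ and consider the map $f$ sending $a \mapsto b$, $c \mapsto c$, $d_i \mapsto d_i$. The relevant $2$-orbits are preserved: $(a,c) \mapsto (b,c)$ stays in $p_2$, each $(a,d_i) \mapsto (b,d_i)$ stays in $q_1$ (both pairs inside $K$), and pairs fixed by $f$ are trivially preserved, so $f$ is an embedding on $k+1$ vertices. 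By $\geq k$-homogeneity, $f$ extends to an automorphism $\sigma$ of $\mcM$ with $\sigma(c)=c$ and $\sigma(a)=b$. Because $\sigma$ fixes $c$, it permutes the set $F_c := \{x \in \mcM : (x,c) \in p_2\}$, which is finite by Lemma \ref{lma5}. Iterating $\sigma$ starting at $a$ produces a sequence in $F_c$; the delicate point is to force that the sequence contains more than two elements so finiteness of $F_c$ is contradicted. If $\sigma$ has order strictly greater than $2$, then $a, b, \sigma(b), \sigma^2(b), \ldots$ must include arbitrarily many distinct elements of $F_c$, contradicting Lemma \ref{lma5}. When only an order-$2$ extension is available (effectively when $|F_c| = 2$), I instead argue that every $K_\infty(a)$ must then also contain $b$: otherwise a $K_\infty(a)$ avoiding $b$ would contribute a fresh element of $F_c$ (by the same $p_1$-exclusion argument applied inside that $K_\infty(a)$). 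This rigidity makes the $q_1$-component of $a$ a single infinite clique, forcing all edges from $c$ into that clique to lie in $q_2$; since $c$ has infinitely many such edges into $S$, this contradicts the finiteness of $q_2$-partners of $c$ given by Lemma \ref{qfinlma}. Either way the assumption $b \not\sim c$ fails.

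Having shown $b \sim c$, we have $b \in S$; and since $S \subseteq K$ is a clique with each element adjacent to $c$, the set $S \cup \{c\}$ is an infinite clique in $\mcM$ containing both $b$ and $c$, which means exactly that $(b,c) \in q_1$. The main obstacle is the control on $\sigma$ in the first step: one must either arrange the extension of $f$ so that $b$ is sent to a third element of $F_c$ (possible once $|F_c| \geq 3$) or rule out the extreme rigidity of the alternative via the $q_2$-finiteness lemma. Everything else is routine orbit bookkeeping in $K$.
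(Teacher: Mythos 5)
Your reduction to the case $(b,c)\in p_2$ is sound: the exclusion of $(b,c)\in p_1$ via the clique $K_\infty(a,b)$ is exactly the right observation, and your final step (once $b$ is adjacent to $c$, the set $S\cup\{c\}$ is an infinite clique containing $b$ and $c$, so $(b,c)\in q_1$) is also fine. The gap is in how you dispose of the case $(b,c)\in p_2$. The automorphism $\sigma$ you construct fixes $c$ and therefore permutes the finite set $F_c=\{x:(x,c)\in p_2\}$, but a permutation of a finite set can never contradict that set's finiteness: the orbit of $a$ under $\sigma$ inside $F_c$ is a finite cycle of some length at least $2$, no matter what the order of $\sigma$ is on all of $\mcM$. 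So the claim that an extension of order greater than $2$ ``must include arbitrarily many distinct elements of $F_c$'' is false, and the fallback for the order-$2$ case (that every $K_\infty(a)$ would then contain $b$, and that all edges from $c$ into $S$ would then lie in $q_2$) consists of assertions that do not follow from anything you have established.

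The paper closes this case with a different and genuinely necessary idea: transport the witness $c$ along the orbit $q_1$ rather than iterating a single automorphism. For every $a'\in K_\infty(a)$ the pair $(a',a)$ lies in the same $2$-orbit $q_1$ as $(a,b)$, so an automorphism carrying $(a,b)$ to $(a',a)$ carries $c$ to some $c'$ with $(a,c')\in p_2$ and $(a',c')\in p_2$; in particular $c'$ is non-adjacent to $a'$. By Lemma \ref{lma5} the set $F_a=\{c':(a,c')\in p_2\}$ is finite, so by pigeonhole a single $c_0\in F_a$ must be non-adjacent to infinitely many elements of $K_\infty(a)$, contradicting that $(a,c_0)\in p_2$ bounds the number of non-neighbours of $c_0$ in $K_\infty(a)$ by $k-1$. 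Replacing your single-automorphism argument by this family-of-automorphisms counting argument repairs the proof; the rest of your write-up can stand.
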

\begin{proof}
If $b$ is not adjacent to $c$ then, for every $a'\in K_\infty(a)$, since $(a',a)\in q_1$, there has to exist an element $c'$ such that $(a,c'), (a',c')\in p_2$. But this contradicts Lemma \ref{lma5}, since each element $c_0$ such $(a,c_0)\in p_2$ is non-adjacent to at most $k-1$ elements in $K_\infty(a)$. Thus we conclude that only $a$ in $K_{\infty}(a)$ can be non-adjacent to $c$, hence $b$ is adjacent to $c$ and more specifically $(b,c)\in q_1$.\\\indent
\end{proof}

\begin{lma}\label{lma6b} Let $a,c,d\in \mcM$. If $c\neq d$ and $(a,c), (a,d)\in p_2$ then $(c,d)\in p_2$.
\end{lma}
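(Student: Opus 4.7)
The plan is to split the argument into two steps: first show that $c$ and $d$ must be non-adjacent, and then show that the resulting non-adjacent pair lies in $p_2$ rather than $p_1$. Throughout, the main engine is Lemma \ref{lma6a} together with the observation that, for every $b \in K_\infty(a)\setminus\{a\}$, the relation $(a,b) \in q_1$ combined with $(a,c), (a,d) \in p_2$ forces $b$ to be adjacent to both $c$ and $d$.

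The main obstacle is Step 1. I would assume for contradiction that $c$ is adjacent to $d$. Then by the observation above, both $c$ and $d$ are adjacent to every element of $K_\infty(a)\setminus\{a\}$, so $(K_\infty(a)\setminus\{a\})\cup\{c,d\}$ is an infinite clique containing both $c$ and $d$, and hence $(c,d) \in q_1$. The key trick is now to apply Lemma \ref{lma6a} a second time with the roles rearranged: $(c,d) \in q_1$ and $(c,a) \in p_2$ (using the symmetry of $p_2$) yield $(d,a) \in q_1$, contradicting $(a,d) \in p_2$ since $q_1$ and $p_2$ are distinct orbits.

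For Step 2, with $c$ and $d$ non-adjacent, the pair $(c,d)$ must lie in $p_1$ or $p_2$. I would exhibit the explicit infinite clique $K := (K_\infty(a)\setminus\{a\})\cup\{c\}$, which is a $K_\infty(c)$ by the same observation used above. The vertex $d$ is non-adjacent within $K$ only to $c$ itself, i.e.\ to a single element. Since membership in $p_1$ requires non-adjacency to at least $k-1$ elements of $K_\infty(c)$, and the existence of a $p_2$-pair already forces $k \geq 3$ (so $k-1 \geq 2 > 1$), this rules out $p_1$ and gives $(c,d) \in p_2$.
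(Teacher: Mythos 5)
Your proposal is correct and follows essentially the same route as the paper: both arguments use the fact that a $p_2$-pair at $a$ forces adjacency to (cofinitely) all of $K_\infty(a)\setminus\{a\}$ to build an explicit infinite clique that eliminates the orbits $q_2$ and $p_1$ for $(c,d)$, and both dispose of the remaining possibility $(c,d)\in q_1$ by a second application of Lemma \ref{lma6a} together with $(c,a)\in p_2$. Your reorganization into an adjacency case split, rather than the paper's direct elimination down to $q_1$, is only a cosmetic difference.
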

\begin{proof}
Assume $(c,d)\notin p_2$ and note that $(a,c),(a,d)\in p_2$ implies that $c$ and $d$ are non-adjacent to a finite amount of elements in $K_\infty(a)$. Thus $c$ is adjacent to an infinite amount of elements in $K_\infty(d)$ and hence the only orbit which $(c,d)$ can be a part of, out of $p_1,q_1$ and $q_2$, is $q_1$. By Lemma \ref{lma6a} it follows that $(c,d)\in q_1$ and $(c,a)\in p_2$ implies $(d,a)\in q_1$, which is a contradiction against $(a,d)\in p_2$.
\end{proof}
It is much harder to get a grip of the orbits $q_2$ and $p_1$. This is a consequence of that we assumed $K_\infty\subseteq \mcM$ and thus having neighbors which are also adjacent to some element is easy to handle. The rest of the section will be dedicated to reasoning out how these orbits work in $\mcM$.
\begin{lma}\label{lma7}
For each $a,b \in \mcM$ if $(a,b)\in q_2$ then each element $c\in K_{\infty}(a)-\{a\}$ will be such that $(b,c)\in p_1$. 
\end{lma}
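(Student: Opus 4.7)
The plan is to establish $(b,c)\in p_1$ in two stages: first that $b$ is not adjacent to $c$, and second that the resulting non-edge pair satisfies the $p_1$ condition. The second stage is almost automatic once the first is in place.

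For the non-adjacency, I argue by contradiction. Suppose $b$ is adjacent to $c$. Since $a$ and $c$ both lie in $K_\infty(a)$ and both are adjacent to $b$, the map swapping $a\leftrightarrow c$ on any $(k-1)$-element subset $\mcG_0\subseteq K_\infty(a)$ containing $a$ and $c$ defines an embedding of $\mcG_0\cup\{b\}$ into $\mcM$: the complete structure of $\mcG_0$ is preserved, and $b$ has the same adjacency to both $a$ and $c$. By $\geq k$-homogeneity this extends to an automorphism sending $(a,b)$ to $(c,b)$, so $(c,b)\in q_2$. Now, for each $d\in K_\infty(a)-\{a,c\}$, the swap $c\leftrightarrow d$ on a $k$-element subset $\mcG_d\subseteq K_\infty(a)$ containing $\{a,c,d\}$ is an automorphism of the complete subgraph $\mcG_d$, hence an embedding of a $k$-element substructure of $\mcM$; by $\geq k$-homogeneity it extends to an automorphism $\sigma_d$ of $\mcM$. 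Setting $b_d:=\sigma_d(b)$, the fact that $\sigma_d$ fixes $a$ places $(a,b_d)$ in the same $2$-orbit as $(a,b)$, so $b_d$ lies in $Q(a):=\{y\in\mcM:(a,y)\in q_2\}$; and since $\sigma_d(c)=d$ while $c$ is adjacent to $b$, also $b_d$ is adjacent to $d$.

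The contradiction now comes from Lemma \ref{qfinlma}: $Q(a)$ is finite, and the definition of $q_2$ gives that each $b'\in Q(a)$ is adjacent to at most $k-2$ elements of $K_\infty(a)$. Hence $\bigcup_{b'\in Q(a)}\{x\in K_\infty(a): x\text{ is adjacent to }b'\}$ has size at most $|Q(a)|(k-2)$, yet by construction it contains every element of the infinite set $K_\infty(a)-\{a,c\}$. Once $b$ is non-adjacent to $c$, the $p_1$ condition is immediate: the chosen $K_\infty(a)$ also serves as $K_\infty(c)$, and $(a,b)\in q_2$ forces $b$ to be adjacent to at most $k-2\leq k-1$ elements of this $K_\infty(c)$, placing $(b,c)\in p_1$ by definition. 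The main obstacle is the non-adjacency step: a direct swap $a\leftrightarrow c$ only produces $(c,b)\in q_2$ rather than an outright contradiction, so one must vary $d$ across the infinite clique $K_\infty(a)$ and exploit the finiteness of $q_2$-neighbourhoods from Lemma \ref{qfinlma} to close the argument.
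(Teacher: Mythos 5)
Your proof is correct and follows essentially the same route as the paper: both arguments transport the configuration around the infinite clique $K_\infty(a)$ via automorphisms fixing $a$ and then contradict the finiteness of $q_2$-neighbourhoods from Lemma \ref{qfinlma}, before reading off the $p_1$ condition from the definition. The only cosmetic difference is that you absorb the case $(b,c)\in q_1$ into your initial swap argument (which places $(c,b)$ in the orbit of $(a,b)$, hence in $q_2$), whereas the paper dismisses that case with a separate one-line remark.
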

\begin{proof}
It is clear that $(b,c)\notin q_1$, since we otherwise would have a contradiction against the facts that $(a,c)\in q_1$ and $(a,b)\in q_2$. Assume in search for a contradiction that $(b,c)\in q_2$. For every element $c_0\in K_{\infty}(a)$, $(c_0,a)$ is in the same $2-$orbit as $(c,a)$ thus there exists an element $d_0$ which is to $(c_0,a)$ as $b$ is to $(c,a)$, thus we know that $(a,d_0),(c_0,d_0)\in q_2$. This implies either that there is an infinite amount of elements $d'$ such that $(a,d')\in q_2$ or that there is an element $d''$ such that $(d'',c_0)\in q_2$ for an infinite amount of elements $c_0\in K_\infty(a)$. However both of these conclusions are contradictions against Lemma \ref{qfinlma}. Thus $c$ is not adjacent to $b$. By the definition of $q_2$, there exists some $\mcG\subseteq K_\infty(a)$ such that $|G|= k$ and each element in $\mcG$ is non-adjacent to $b$, thus $(b,c)\in p_1$.
\end{proof}
In the upcoming two lemmas we will show that the orbit $q_2$ and the orbit $p_2$ are very closely linked, and in fact most cases where $q_2$ exist, also $p_2$ has to exist.
\begin{lma}\label{lma8}
If $a,b_1,b_2\in \mcM$, $b_1\neq b_2$ and $(a,b_1),(a,b_2)\in q_2$ then $(b_1,b_2)\in p_2$.
\end{lma}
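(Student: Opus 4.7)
Since $b_1 \neq b_2$, the pair $(b_1, b_2)$ lies in exactly one of the four 2-orbits $q_1, q_2, p_1, p_2$, and the plan is to eliminate the first three. Ruling out $q_1$ is immediate: the symmetric form of Lemma \ref{lma7} applied to $(a, b_1) \in q_2$ gives $(a, c) \in p_1$ for every $c \in K_\infty(b_1) - \{b_1\}$, so $(b_1, b_2) \in q_1$ would force $(a, b_2) \in p_1$, contradicting $(a, b_2) \in q_2$.

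For $(b_1, b_2) \notin q_2$, which I expect to be the most delicate step, suppose on the contrary that $(b_1, b_2) \in q_2$, so that $\{a, b_1, b_2\}$ forms a $q_2$-triangle. By Lemma \ref{lma7} applied in both its direct and symmetric forms to each of the three $q_2$-edges, every vertex of the triangle is non-adjacent to every element of the other two vertices' $K_\infty$'s (minus their centres). Fixing $c_1, \ldots, c_{k-1} \in K_\infty(b_2) - \{b_2\}$, both $\{b_2, c_1, \ldots, c_{k-1}, a\}$ and $\{b_2, c_1, \ldots, c_{k-1}, b_1\}$ have the same isomorphism type (a $K_k$ on $\{b_2, c_i\}$ with a pendant vertex joined to $b_2$ by a $q_2$-edge), so $\geq k$-homogeneity provides an automorphism $\phi$ fixing $b_2$ and each $c_i$ and sending $a \mapsto b_1$. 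Preservation of the orbits of $(a, b_1)$ and $(b_1, b_2)$ forces $\phi(b_1)$ to be $q_2$-related simultaneously to $b_1$ and $b_2$. Re-running the construction with other choices of the $c_i$'s and cycling the roles of the three triangle vertices forces the set of $q_2$-neighbours of $b_2$ to grow without bound, contradicting Lemma \ref{qfinlma}. A single application of the swap is itself consistent with the triangle, so the contradiction emerges only through iteration.

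Finally, suppose $(b_1, b_2) \in p_1$. Then $b_2$ is adjacent to at most $k - 1$ elements of $K_\infty(b_1)$, so $b_2$ is non-adjacent to infinitely many $c \in K_\infty(b_1) - \{b_1\}$. For each such $c$, pick $k - 2$ further elements of $K_\infty(b_1) - \{b_1\}$ also non-adjacent to $b_2$; the swap $b_1 \leftrightarrow c$ inside the resulting $K_k$, with $b_2$ and the remaining elements fixed, is an embedding and extends by $\geq k$-homogeneity to an automorphism $\psi_c$ of $\mcM$. Preservation of the orbits of $(a, b_1)$ and $(a, b_2)$ forces $\psi_c(a)$ to be $q_2$-related simultaneously to $c$ and to $b_2$. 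Thus $\psi_c(a)$ is one of the finitely many $q_2$-neighbours $v$ of $b_2$, and $c$ is one of the finitely many $q_2$-neighbours of that $v$. Letting $c$ vary, the infinite set of admissible $c$ is contained in a finite union of finite sets, a contradiction to Lemma \ref{qfinlma}. Since $q_1, q_2$ and $p_1$ are all ruled out, $(b_1, b_2) \in p_2$ as required.
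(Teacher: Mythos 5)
Your elimination of $q_1$ matches the paper, and your elimination of $p_1$ is correct but genuinely different from the paper's: the paper transports $b_2$ onto a vertex $c\in K_\infty(a)$ over a $k$-element independent set inside $K_\infty(b_1)$, forcing $(a,b_2)$ and $(a,c)\in q_1$ into the same orbit, whereas you swap $b_1$ with vertices $c\in K_\infty(b_1)$ while fixing $b_2$ and then count: each admissible $c$ must be a $q_2$-neighbour of some $q_2$-neighbour of $b_2$, so Lemma \ref{qfinlma} traps infinitely many $c$ in a finite set. That counting argument is sound and arguably more robust, since it never needs to locate a common independent set for three vertices at once.

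The $q_2$ case, however, has a genuine gap. Your automorphism $\phi$ fixes $b_2$, so it permutes the \emph{finite} set $N$ of $q_2$-neighbours of $b_2$ (Lemma \ref{qfinlma}); both $a$ and $b_1$ lie in $N$, and $\phi$ merely restricts to a permutation of $N$ sending $a\mapsto b_1$. Nothing forces $\phi(b_1)$ to be a new element: $\phi(b_1)=a$ satisfies every constraint you derive ($a$ is already $q_2$-related to both $b_1$ and $b_2$, and $(a,c_i)\in p_1$ for each $c_i\in K_\infty(b_2)-\{b_2\}$ by Lemma \ref{lma7}). Re-running with other $c_i$'s or cycling the triangle's vertices produces more such permutations of $N$, but a finite set closed under all of them is perfectly possible, so the set of $q_2$-neighbours never "grows without bound" and no contradiction is reached. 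The paper closes this case by a different comparison: choose $d\in K_\infty(b_1)$ and an independent $\mcG\subseteq K_\infty(a)$ with $|\mcG|=k$ whose elements are non-adjacent to $b_1$, $b_2$ and $d$ (possible since, by Lemma \ref{lma7} and the definition of $q_2$, each of $b_1,b_2,d$ meets $K_\infty(a)$ in at most $k-1$ vertices). The map fixing $\mcG\cup\{b_1\}$ and sending $d\mapsto b_2$ is an embedding, so $(b_1,d)\in q_1$ and $(b_1,b_2)$ lie in the same orbit, contradicting $(b_1,b_2)\in q_2$. The essential idea you are missing is to compare $b_2$ against a genuine $K_\infty$-neighbour of $b_1$ over a large independent set, rather than comparing $a$ against $b_1$ over a clique, where the two really are indistinguishable.
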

\begin{proof}
By Lemma \ref{lma7} if $d\in K_{\infty}(b_1)$ then $d$ can at most be adjacent to $k-1$ elements in $K_{\infty}(a)$. If $(b_1,b_2)\in q_1$ then $b_1\in K_\infty(b_2)$ which together with Lemma \ref{lma7} implies that $(a,b_1)\in p_1$ which is a contradiction.\\\indent Assume instead that $(b_1,b_2)\in q_2$. Choose $\mcG\subseteq K_\infty(a)$ and $d\in K_\infty(b_1)$ such that $|G|=k$ and all elements in $\mcG$ are non-adjacent to $b_1,b_2$ and $d$. The function $f:\mcG\cup\{b_1,d\}\rightarrow \mcG\cup\{b_1,b_2\}$ mapping $\mcG$ to $\mcG$ and $(b_1,d)$ to $(b_1,b_2)$ is then an embedding, thus $\geq$$k-$homogeneity implies that $(b_1,b_2)$ and $(b_1,d)$ belong to the same orbit which is a contradiction as $(b_1,d)\in q_1$. We conclude that $b_1$ must be nonadjacent to $b_2$.
\\\indent
Assume $(b_1,b_2)\in p_1$ and let $c\in K_\infty(a)$. It is then possible to find $\mcG\subseteq K_{\infty}(b_1)$ such that $|G|=k$ and all elements in $\mcG$ are non-adjacent to $a,c$ and $b_2$. If $f:\mcG\cup\{a,c\}\rightarrow \mcG\cup\{a,b_2\}$ maps $\mcG$ to $\mcG$ and $(a,c)$ to $(a,b_2)$ then the $\geq$$k-$homogeneity implies that $(a,c)\in q_2 $, which is a contradiction.
\end{proof}
\begin{lma}\label{lma10}
Let $a,b,c\in \mcM$. If $(a,b)\in q_2$ and $(b,c)\in p_2$ then $(a,c)\in q_2$.
\end{lma}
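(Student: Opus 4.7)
The plan is to reduce to a single hard case by checking the possible orbits of $(a,c)$, then to use $\geq k-$homogeneity together with the $2$-orbit structure to obtain a contradiction in the remaining case. Since $\mcM$ has exactly the four $2$-orbits $q_1, q_2, p_1, p_2$ for distinct pairs and $a \neq c$, the pair $(a,c)$ lies in one of them. If $(a,c) \in q_1$, then Lemma \ref{lma6a} applied with $(c,a) \in q_1$ and $(c,b) \in p_2$ forces $(a,b) \in q_1$, contradicting $(a,b) \in q_2$. If $(a,c) \in p_2$, then by the symmetry of the orbits $(c,a), (c,b) \in p_2$, and since $a \neq b$, Lemma \ref{lma6b} gives $(a,b) \in p_2$, again a contradiction. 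So only $(a,c) \in p_1$ and $(a,c) \in q_2$ remain, and the task reduces to ruling out $p_1$.

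To handle $(a,c) \in p_1$, I plan to produce a specific automorphism and push the known structure around. Using $(b,c) \in p_2$, choose an infinite $X \subseteq K_\infty(b) - \{b\}$ consisting of neighbours of $c$; by Lemma \ref{lma7} applied to the symmetric pair $(b,a) \in q_2$, every $x \in X$ satisfies $(a,x) \in p_1$. Fix one such $x$. Since $(a,x)$ and $(a,c)$ lie in the same $2$-orbit $p_1$, the action of $\Aut(\mcM)$ on pairs supplies an automorphism $\phi$ with $\phi(a) = a$ and $\phi(x) = c$. Setting $b' := \phi(b)$, the image of $(a,b) \in q_2$ under $\phi$ gives $(a,b') \in q_2$, and the image of $(x,b) \in q_1$ (both lie in $K_\infty(b)$) gives $(c,b') \in q_1$.

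The next step is to verify $b' \neq b$: otherwise $\phi$ fixes both $a$ and $b$ while moving $x$ to $c$, placing $(b,x) \in q_1$ and $(b,c) \in p_2$ in the same $2$-orbit, which is impossible. With $b \neq b'$, Lemma \ref{lma8} applied to $(a,b), (a,b') \in q_2$ yields $(b,b') \in p_2$. Finally, Lemma \ref{lma6a} applied to $(b',c) \in q_1$ and $(b',b) \in p_2$ produces $(c,b) \in q_1$, which contradicts $(c,b) \in p_2$. The main obstacle is exactly this $p_1$ case: Lemmas \ref{lma6a} and \ref{lma6b} do not apply directly, so one has to exploit the fact that $p_1$ is itself a single $\Aut(\mcM)$-orbit of pairs in order to transport $b$ to a witness $b'$ that feeds into the already-established chain and closure rules for $q_2$ and $p_2$.
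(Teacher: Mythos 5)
Your proof is correct and follows essentially the same route as the paper: rule out $q_1$ and $p_2$ via Lemmas \ref{lma6a} and \ref{lma6b}, then in the $(a,c)\in p_1$ case transport the configuration $(a,x,b)$ with $x\in K_\infty(b)-\{b\}$ along the single orbit $p_1$ to obtain a witness $b'$ with $(a,b')\in q_2$ and $(c,b')\in q_1$, and finish with Lemma \ref{lma8} plus a closure lemma. The only (harmless) differences are that you close with Lemma \ref{lma6a} where the paper uses Lemma \ref{lma6b}, and that you explicitly check $b'\neq b$, a point the paper leaves implicit.
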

\begin{proof}
Using Lemma \ref{lma6a} and $(b,c)\in p_2$ we get a contradiction against $(a,b)\in q_2$ if $(a,c)\in q_1$. If $(a,c)\in p_2$ we get a contradiction against $(a,b)\in q_2$ using Lemma \ref{lma6b} and $(b,c)\in p_2$.
Thus we know that $(a,c)\notin q_1$ and $(a,c)\notin p_2$.\\\indent Assume, in search for a contradiction, that $(a,c)\in p_1$. Let $d \in K_{\infty}(b)-\{b\}$ and note by Lemma \ref{lma7} that $(a,d)\in p_1$. Since $(a,d)$ and $(a,c)$ are in the same orbit, there has to exist an element $e\in \mcM$, corresponding to what $b$ is to $(a,d)$, such that $(e,c)\in q_1$ and $ (e,a)\in q_2$. Lemma \ref{lma8} now implies that $(e,b)\in p_2$ which in turn together with Lemma \ref{lma6b} implies that $(e,c)\in p_2$. But $(e,c)\in q_1$, hence this is a contradiction and we can conclude that $(a,c)\in q_2$.
\end{proof}
Lastly we figure out how the orbit $p_1$ behaves. This is the hardest orbit to handle, as it induces so little information about edges. 
\begin{lma}\label{lma11}
Let $a,b,c\in \mcM$. If $(a,b)\in p_1$ and $(a,c)\in p_2$ then $(b,c)\in p_1$.
\end{lma}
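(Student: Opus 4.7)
The plan is a straightforward case analysis on the $2$-orbit of $(b,c)$. By the work earlier in the section, $\mcM$ has at most five $2$-orbits: the diagonal orbit $\{(x,x):x\in M\}$ together with $q_1, q_2, p_1$ and $p_2$. Since $(a,b)\in p_1$ and $(a,c)\in p_2$ lie in distinct orbits, $b\neq c$, so it suffices to rule out the three alternatives $(b,c)\in q_1$, $(b,c)\in q_2$ and $(b,c)\in p_2$.

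Suppose first that $(b,c)\in q_1$. Using the symmetry of the four non-diagonal $2$-orbits (observed just before Lemma \ref{lmaE}) we have $(c,b)\in q_1$ and $(c,a)\in p_2$, so Lemma \ref{lma6a}, applied with $c, b, a$ in the roles of $a, b, c$, yields $(b,a)\in q_1$, contradicting $(a,b)\in p_1$. Suppose next that $(b,c)\in q_2$. From $(b,c)\in q_2$ and $(c,a)\in p_2$, Lemma \ref{lma10}, applied with $b, c, a$ in the roles of $a, b, c$, yields $(b,a)\in q_2$, again contradicting $(a,b)\in p_1$. Finally, suppose $(b,c)\in p_2$; then $(c,a)$ and $(c,b)$ both lie in $p_2$, and since $a\neq b$, Lemma \ref{lma6b} forces $(a,b)\in p_2$, once more contradicting $(a,b)\in p_1$. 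Having excluded the three competing orbits, $(b,c)\in p_1$.

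The proof is really a bookkeeping exercise. All of the heavy lifting, namely the description of how $p_2$ and $q_2$ interact with $q_1$ and with each other, is already absorbed into Lemmas \ref{lma6a}, \ref{lma6b} and \ref{lma10}; the only point that requires a moment's thought is the symmetry of the non-diagonal $2$-orbits, which lets us freely flip coordinates when lining up hypotheses with the earlier lemmas. The pleasant feature is that the transitivity-like behaviour of $p_2$ recorded in Lemma \ref{lma6b} disposes of the remaining case in a single line, so no new combinatorial argument is needed here; the main obstacle, if one wants to call it that, is simply making sure the enumeration of alternatives really is exhaustive, which is what the count of at most five $2$-orbits guarantees.
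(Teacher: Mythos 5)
Your proof is correct and follows essentially the same route as the paper: the paper's own argument is precisely to exclude $(b,c)\in q_1$, $(b,c)\in p_2$ and $(b,c)\in q_2$ by invoking Lemmas \ref{lma6a}, \ref{lma6b} and \ref{lma10} respectively, and then conclude $(b,c)\in p_1$ by elimination. You have merely spelled out the role substitutions and the appeal to orbit symmetry that the paper leaves implicit.
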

\begin{proof}
Lemma \ref{lma6a} implies that $(b,c)\notin q_1$, Lemma \ref{lma6b} implies that $(b,c)\notin p_2$ and Lemma \ref{lma10} implies that $(b,c)\notin q_2$, thus we conclude that $(b,c)\in p_1$.
\end{proof}
\begin{lma}\label{lma9}
Let $a,b,c\in \mcM$ with $b\neq c$. If $(a,b),(a,c)\in p_1$ then $(b,c)\in q_1$ or $(b,c)\in p_2$.
\end{lma}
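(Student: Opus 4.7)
The plan is to prove $(b,c) \in q_1 \cup p_2$ by ruling out the two remaining possibilities, $(b,c) \in q_2$ and $(b,c) \in p_1$, using the lemmas already established.

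First, I would handle the case $(b,c) \in q_2$. Assume for contradiction that $(b,c) \in q_2$. By Lemma \ref{lma7} applied to the pair $(b,c) \in q_2$, every element $d \in K_\infty(b) \setminus \{b\}$ satisfies $(c,d) \in p_1$. Because $(a,b) \in p_1$, the vertex $a$ is adjacent to at most $k-1$ elements of $K_\infty(b)$, so infinitely many $d \in K_\infty(b)\setminus\{b\}$ are non-adjacent to $a$. Fix such a $d$. Since $(b,d) \in q_1$, $(a,b) \in p_1$, and $a$ is not adjacent to $d$, Lemma \ref{lmaE} yields $(a,d) \in p_1$. Combined with $(a,c) \in p_1$ and $(c,d) \in p_1$, the triple $(a,c,d)$ has all three pairs in $p_1$, so this case reduces to the second case below.

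Second, I would handle the case where all three pairs $(a,b), (a,c), (b,c)$ lie in $p_1$. Since $b$ and $c$ are each adjacent to at most $k-1$ elements of $K_\infty(a)$, I can choose $\mcG \subseteq K_\infty(a) \setminus \{a\}$ with $|\mcG| = k$ such that every element of $\mcG$ is non-adjacent to both $b$ and $c$. By Lemma \ref{lmaE}, each $d \in \mcG$ satisfies $(d,b), (d,c) \in p_1$. The key step is to use $\geq k$-homogeneity on the substructure $\mcG \cup \{a,b,c\}$ — where $\mcG \cup \{a\}$ is a $K_{k+1}$ and $b, c$ are non-adjacent to it and to each other. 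The embedding swapping $a$ with some $d \in \mcG$ (fixing $b, c$) extends to an automorphism showing that every $d \in \mcG$ can play the role of $a$, producing an infinite supply of elements $d$ with $(d,b), (d,c) \in p_1$. A parallel construction in $K_\infty(b)$ produces elements that are $q_1$-related to $b$ but $p_1$-related to $a$ and to $c$. Combining these constructions, by $\geq k$-homogeneity one transfers the hypothetical $p_1$-triple structure to configurations whose $q_2$- or $p_2$-neighborhoods of a fixed vertex are forced to be infinite, contradicting Lemma \ref{qfinlma} or Lemma \ref{lma5}.

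The main obstacle is the second case. As the paragraph before the lemma warns, the orbit $p_1$ contains very little direct edge information, so the argument cannot proceed by a simple automorphism swap; instead it must carefully exploit the finiteness of $q_2$- and $p_2$-neighborhoods together with the closure-like behavior established in Lemmas \ref{lma6a}, \ref{lma6b}, \ref{lma8}, \ref{lma10}, and \ref{lma11}. Morally, in the target graphs $\mcH_{t,1}$ and $\mcH_{t,2}$ the orbit $p_1$ corresponds to "different halves", of which there are only two, so three pairwise $p_1$-related vertices cannot coexist; the proof must extract this 2-colorability intrinsically from the abstract $\geq k$-homogeneity hypothesis.
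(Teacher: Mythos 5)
Your reduction of the case $(b,c)\in q_2$ to the all-$p_1$ case is correct and is actually a different (and rather clean) route from the paper's: the paper handles $(b,c)\in q_2$ directly, by choosing $\mcG\subseteq K_\infty(a)$ with all elements non-adjacent to $b$ and $c$, finding $d\in K_\infty(b)$ non-adjacent to $\mcG$ and (by Lemma \ref{lma7}) to $c$, and swapping $d$ for $c$ over $\mcG\cup\{b\}$ to force $(b,c)$ and $(b,d)$ into the same orbit. Your chain via Lemma \ref{lma7} and Lemma \ref{lmaE} to produce the pairwise-$p_1$ triple $(a,c,d)$ is a valid alternative, provided the second case is actually closed.

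It is not. Your treatment of the all-$p_1$ triple is a strategy sketch, not a proof, and this is precisely where the content of the lemma lives. Producing "an infinite supply of elements $d$ with $(d,b),(d,c)\in p_1$" is not in itself problematic: in $\mcH_{t,1}$ every vertex of one half is $p_1$-related to any two vertices of the other half, so no contradiction with Lemma \ref{qfinlma} or Lemma \ref{lma5} follows from that alone, and you never exhibit a concrete vertex whose $q_2$- or $p_2$-neighborhood becomes infinite. The paper's argument is structurally different: since $\mcM$ is not $2$-homogeneous and $p_1$ exists, at least one of $p_2$, $q_2$ exists, and the proof splits on which. If $p_2$ exists, one takes $d$ with $(a,d)\in p_2$, uses Lemma \ref{lma11} to get $(b,d)\in p_1$, and builds an embedding over a suitable $\mcG\subseteq K_\infty(b)$ sending $d$ to $c$, forcing $(a,d)\in p_1$ --- a contradiction. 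If $q_2$ exists, one takes $d$ with $(a,d)\in q_2$, argues via Lemma \ref{lma8} that $(c,d)$ (or $(b,d)$) lies in $p_1$, and constructs an embedding sending $(a,d)$ to a pair in $q_1$ --- again a contradiction. In both subcases the contradiction is an orbit collision produced by an explicit embedding, not an infinitude argument. You need to supply this case split and these explicit embeddings (or some equally concrete substitute) for the proof to stand.
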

\begin{proof}
Assume $(b,c)\in q_2$ and let $\mcG\subseteq K_\infty(a)$ be such that $|G|=k$ and both $b$ and $c$ are non-adjacent to each element in $\mcG$. By the definition of $p_1$ there exists $d\in K_\infty(b)$ such that $d$ is non-adjacent to each element in $\mcG$ and by Lemma \ref{lma7} we know that $d$ is not adjacent to c. The function $f:\mcG\cup\{b,d\}\rightarrow \mcG\cup\{b,c\}$ mapping $\mcG\cup\{b\}$ pointwise to $\mcG\cup\{b\}$ and $d$ to $c$ is then an embedding, which by the $\geq$$k-$homogeneity may be extended into an automorphism. This is a contradiction since $(b,c)\in q_2$ but $(b,d)\in q_1$.\\\indent
Assume for the rest of this proof that $(b,c)\in p_1$. In order to reach a contradiction in this case we will also need to make assumptions on which of the orbits $q_2$ and $p_2$ exists. Assume that $p_2$ exist, let $d$ be such that $(a,d)\in p_2$. Lemma \ref{lma11} implies that $(b,d)\in p_1$, thus d is adjacent to at most $k-1$ elements in $K_\infty(b)$. We may then find $\mcG\subseteq K_\infty(b)$ such that $|G|=k$ and all elements in $\mcG$ are non-adjacent to $a,c$ and $d$. Thus the function $f:\mcG\cup\{a,d\}\rightarrow \mcG\cup \{a,c\}$ mapping $\mcG\cup\{a\}$ to itself pointwise and $d$ to $c$ is an embedding. Since $\mcM$ is $\geq$$k-$homogeneous $f$ may be extended into an automorphism which implies that $(a,d)\in p_1$ which is a contradiction.\\\indent
Assume $q_2$ exists and that $d$ is such that $(a,d)\in q_2$. If both $b$ and $c$ are non-adjacent to less than $k$ elements in $K_\infty(d)$ then there is an infinite $\mcG\subseteq K_\infty(d)$ such that both $b$ and $c$ are adjacent to all elements in $\mcG$. This however contradicts that $(b,c)\in p_1$, thus at least one of $b$ or $c$ is non-adjacent to more than $k$ elements in $K_\infty(d)$ and thus $b$ or $c$ is adjacent to at most $k-1$ elements in $K_\infty(d)$. From Lemma \ref{lma8} it follows that $(d,b),(d,c)\notin q_2$. Thus at least one of $(b,d)$ and $(c,d)$ belong to $p_1$. Assume $(c,d)\in p_1$ (the case $(b,d)\in p_1$ is similar). 
We may then find $\mcG\subseteq K_\infty(a)$, $\mcH\subseteq K_\infty(c)$ and $e\in K_\infty(c)$ such that $e$ is not adjacent to $d$, $|G|=|H|=k$, each vertex in $\mcG$ is not adjacent to $c$ or $e$ and each vertex in $\mcH$ is not adjacent to $d$ or $a$. 
The function $f:\mcH\cup\{a,d\}\rightarrow \mcG\cup\{c,e\}$ mapping $\mcH$ to $\mcG$, $a$ to $e$ and $d$ to $c$ is then an embedding. Thus $\geq$$k-$homogeneity implies that $f$ may be extended into an automorphism and thus $(a,d)\in q_1$ which is a contradiction.
\end{proof}
We will now put together our previous knowledge in to a lemma which gives us the second part in proving Lemma \ref{twothm}. Recall the definition of $\mcG_t$ from the introduction. 
\begin{lma}\label{mainlma}
The following hold for $\mcM$:
\begin{itemize}
\item If $p_2$ does not exist then $\mcM\cong \mcH_{1,2}$.
\item If $q_2$ does not exist then for some $n\geq 2$, $\mcM\cong \mcH_{n,1}$.
\item If both $p_2$ and $q_2$ exist then for some $n\geq 2$, $\mcM\cong \mcH_{n,2}$
\end{itemize}
\end{lma}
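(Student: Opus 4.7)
The strategy is to recover the half-column/part bipartite structure of the target graphs directly from the 2-orbit analysis already established in Lemmas~\ref{lma6a}--\ref{lma9}. I will define two equivalence relations on $\mcM$: a fine one $\sim_h$ giving half-columns, and a coarse one $\sim_p$ giving parts, and then read off the isomorphism type.

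First, define the half-column relation $a \sim_h b$ iff $a = b$ or $(a,b) \in p_2$; this is an equivalence by Lemma~\ref{lma6b}, with finite classes by Lemma~\ref{lma5} of common size $n$ by 1-homogeneity, where $n = 1$ exactly when $p_2$ does not exist. Within a $\sim_h$-class there are no edges.

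Second, define the part relation $a \sim_p b$ iff $a \sim_h b$ or $(a,b) \in q_1$. Three of the four cases of transitivity are handled directly: $p_2$-$p_2$ by Lemma~\ref{lma6b}, and the two mixed $q_1$-$p_2$ cases by Lemma~\ref{lma6a}. The main technical obstacle is the remaining $q_1$-$q_1$ case, namely proving that $(a,b),(b,c) \in q_1$ with $a \neq c$ forces $(a,c) \in q_1 \cup p_2$. The alternative $(a,c) \in q_2$ is ruled out by Lemma~\ref{lma7}: applied to $(a,c) \in q_2$ with $b \in K_\infty(a,b) - \{a\}$, it would force $(b,c) \in p_1$, contradicting $(b,c) \in q_1$. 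Ruling out $(a,c) \in p_1$ is subtler; my approach is to exhibit many $p_1$-neighbours of $a$ inside $K_\infty(b,c)$ (the alternative orbit $p_2$ forces $a$ to be adjacent to cofinitely many elements of $K_\infty(b,c)$, contradicting $(a,c) \in p_1$ via the symmetric definition of $p_1$), and then combine Lemma~\ref{lma9} applied to pairs of these neighbours with $\geq k$-homogeneity-based swaps to produce a configuration incompatible with $(a,b) \in q_1$.

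Once transitivity of $\sim_p$ is established, Lemma~\ref{lma9} (combined with Lemmas~\ref{lma7} and~\ref{lma8} to handle the $q_2$-mixed subcases) shows that no three pairwise $\sim_p$-inequivalent vertices can exist, so there are exactly two $\sim_p$-classes. Within each part, Lemma~\ref{lma6a} makes distinct $\sim_h$-classes completely $q_1$-connected, so each part is isomorphic to $\mcG_n$. Across parts the only admissible relations are $p_1$ (non-edges) and $q_2$ (edges), and Lemmas~\ref{lma8} and~\ref{lma10} together show that the $q_2$-edges form a block matching between $\sim_h$-classes on opposite sides: each $\sim_h$-class on one side is matched to exactly one on the other, with every cross-pair $q_2$-related. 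The three subconclusions then follow: if $p_2$ is absent, $n = 1$ and $\mcM$ consists of two $K_\infty$'s joined by a perfect matching, so $\mcM \cong \mcH_{1,2}$; if $q_2$ is absent, $\mcM \cong \mcG_n \dot\cup \mcG_n = \mcH_{n,1}$; and if both exist, the $q_2$ block matching provides exactly the column edges of $\mcH_{n,2}$.
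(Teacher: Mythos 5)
Your overall architecture --- recovering the half\-/columns from $p_2$ via Lemmas \ref{lma5} and \ref{lma6b}, the parts from $q_1\cup p_2$ via Lemmas \ref{lma6a} and \ref{lma9}, and the cross-part $q_2$-matching from Lemmas \ref{lma8} and \ref{lma10} --- is essentially the paper's own argument, just packaged more explicitly as a pair of equivalence relations. The steps you actually carry out are correct and use the same lemmas in the same way, so there is no genuinely different route here to compare.

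The issue is the step you yourself flag as the main obstacle: that $(a,b),(b,c)\in q_1$ with $a\neq c$ forces $(a,c)\in q_1\cup p_2$. Ruling out $(a,c)\in q_2$ via Lemma \ref{lma7} is fine, but your plan for ruling out $(a,c)\in p_1$ does not close. You propose to exhibit many $p_1$-neighbours of $a$ inside $K_\infty(b,c)$ and apply Lemma \ref{lma9} to pairs of them; any two such neighbours already lie in the common clique $K_\infty(b,c)$, so Lemma \ref{lma9} with apex $a$ returns only $(d_1,d_2)\in q_1\cup p_2$, which is already known, and the ``$\geq k$-homogeneity-based swaps'' are left unspecified. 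What actually makes the contradiction work is to place the apex \emph{outside both cliques}: pick $z$ with $(b,z)\in p_1$ (possible, since $p_1$ is nonempty and there is one $1$-orbit). Then $z$ is adjacent to at most $k-1$ elements of each of $K:=K_\infty(a,b)$ and $K':=K_\infty(b,c)$, and by Lemma \ref{lma5} all but finitely many of its non-neighbours in $K\cup K'$ are $p_1$-related to it; Lemma \ref{lma9} with apex $z$ now applies to \emph{cross} pairs $x\in K$, $y\in K'$, and after discarding the finitely many $p_2$-outcomes (Lemma \ref{lma5} again) it shows that cofinitely many $x\in K$ are adjacent to cofinitely many elements of $K'$. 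This contradicts $(a,c)\in p_1$, which (again via Lemma \ref{lma5} and the definition of $p_1$) forces cofinitely many elements of $K$ to be $p_1$-related to $c$ and hence adjacent to at most $k-1$ elements of $K'$. (When $q_2$ exists there is a shortcut: swap $c$ with a $q_2$-partner of $b$ over a $k$-clique in $K$ avoiding both of their neighbourhoods, contradicting $(b,c)\in q_1$.) To be fair, the paper's own proof of this lemma is silent on exactly this point; but since your write-up explicitly reduces the lemma to this transitivity claim, it stands or falls with it, and as sketched it falls.
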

\begin{proof}
Assume $p_2$ does not exist. Lemma \ref{lma8} implies that for each $a\in \mcM$ there is a unique element $b$ such that $(a,b)\in q_2$. For each element there is a $K_\infty$ containing it, and by what we know about $p_1$ there are at least two disjoint such. Lemma \ref{lma9} implies that if $a$ is non-adjacent to both $b$ and $c$ then $b,c$ are both contained in the same $K_{\infty}$. Thus we conclude that there are exactly two disjoint copies of $K_\infty$ such that each vertex is connected to exactly one vertex in the other $K_\infty$. This implies that $\mcM\cong \mcH_{1,2}$.
\\\indent 
Assume $q_2$ does not exist, and assume that for each $a$ there are exactly $n-1$ elements $b$ such that $(a,b)\in p_2$. Each element is contained in at least one $K_\infty$, Lemma \ref{lma6a} implies that there are exactly $n-1$ elements which $a$ is non-adjacent to and for which $a$ is exchange\-able in $K_\infty(a)$ i.e. we have found a $\mcG_n$ subgraph. Since $p_1$ exists there has to be at least two copies of $\mcG_n$ in $\mcM$, however Lemma \ref{lma9} implies that there are exactly two, hence $\mcM\cong\mcH_{n,1}$.
\\\indent
Assume both $p_2$ and $q_2$ exist. By the same reasoning as in the previous case when $p_2$ exists, we get that there exist two copies of $\mcG_n$ for some $n\geq 2$. We however also have the existence of $q_2$, and by Lemma \ref{lma8} and Lemma \ref{lma10} we know that for each distinct $a,b,c$ such that $(a,b)\in q_2$, $(a,c)\in q_2$ if and only if $(b,c)\in p_2$. This implies that $\mcM\cong \mcH_{n,2}$.
\end{proof}
Now we can finally prove the main lemma of this section. Note that we in this proof do not assume anything except what is stated in the formulation of the lemma.
\begin{proof}{Proof Lemma \ref{twothm}}
We prove that the suggested graphs are actually $\geq$$k-$ho\-mogeneous, for some $k$, and the other direction is done in Lemma \ref{mainlma}. We first note that if $\mcM = \mcH_{t,1}$ or $\mcM = \mcH_{t,2}$ then there are two parts in $\mcM$, each of which is isomorphic to $\mcG_r$ for some $r$. If $\mcM \cong \mcH_{t,1}$ and $\mcG\subseteq \mcM$ such that $\omega >|G|\geq 2t+1$ there has to be at least one edge between some elements $a,b\in G$. Thus if $f:\mcG\rightarrow \mcM$ is an embedding, each vertex adjacent to $a$ or $b$ will then be mapped to one of the $\mcG_r$ parts of $\mcM$ and each vertex not adjacent $a$ nor $b$, will be mapped the other $\mcG_r$ part. As no edges exist between the two parts and each part is a homogeneous graph $f$ may be extended into an automorphism. \\\indent
If $\mcM\cong \mcH_{t,2}$ and $\mcG\subseteq \mcM$ with $\omega>|G|\geq 4t+1$ then there exist vertices $a,b,c\in \mcG$ which are adjacent to each other. If $f:\mcG\rightarrow \mcM$ then $a,b$ and $c$ needs to be mapped to the same part. An element $d\in \mcG$ is mapped to the same part as $a,b$ and $c$ if and only if it is adjacent to two of these vertices. As edges between parts are preserved by $f$ and each part is homogeneous, $f$ may be extended into an automorphism. \\\indent 
If $\mcM\cong \mcH_{t,1}^c$ or $\mcM\cong\mcH_{t,2}^c$ then the reasoning is equivalent.
\end{proof}
\section{$1-$ and $2-$homogeneous graphs}\label{sec3}
In this section we want to prove the following lemma which will finish the classification in Theorem \ref{thm}.
\begin{lma}\label{thirdmainlemma} Let $k\in\mbbN$. Each infinite graph $\mcM$ which is $\geq$$k-$homogeneous, $1-$homogeneous and $2-$homogeneous is homogeneous.
\end{lma}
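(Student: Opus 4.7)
The plan is to show by downward induction on $t$ that $\mcM$ is $t$-homogeneous for every $t\in\{3,\ldots,k-1\}$; the cases $t\in\{1,2\}$ and $t\ge k$ are given by the hypotheses, and there is nothing to do when $k\le 3$. The base case $t=k-1$ uses $k$-homogeneity directly. For the inductive step I assume $\mcM$ is $(t+1)$-homogeneous and establish $t$-homogeneity.

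Fix an embedding $f:\mcA\to\mcM$ with $|A|=t$; the goal is to extend $f$ to an automorphism of $\mcM$. The key reduction is the following \emph{one-point extension claim}: for every $c\in\mcM\setminus\mcA$ there exists $d\in\mcM\setminus f(\mcA)$ such that $f\cup\{(c,d)\}\colon\mcA\cup\{c\}\to\mcM$ is again an embedding. Granted this claim, the inductive assumption of $(t+1)$-homogeneity extends the larger embedding $f\cup\{(c,d)\}$ to an automorphism of $\mcM$, which in particular extends $f$.

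The heart of the proof is therefore the one-point extension claim. By Ramsey's theorem, and by passing to the complement graph if necessary (as in Sections~\ref{sec1} and \ref{sec2}), I may assume $K_\infty\subseteq\mcM$; by $2$-homogeneity every edge of $\mcM$ is then contained in some copy of $K_\infty$. To produce $d$, my plan is to embed both $\mcA\cup\{c\}$ and a target configuration $f(\mcA)\cup\{d\}$ into a common size-$k$ subgraph built using copies of $K_\infty$, and then apply $\ge k$-homogeneity to locate $d$ via an automorphism of that size-$k$ piece. In other words, the strategy is to bootstrap a $t$-sized partial embedding, together with the requested one extra vertex, all the way up to a $k$-sized partial embedding, where $\ge k$-homogeneity takes over.

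The main obstacle is carrying out the transfer of the edge/non-edge pattern of $c$ over $\mcA$ to a realisable pattern over $f(\mcA)$. I anticipate splitting into cases based on whether $c$ is adjacent to all, some, or none of $\mcA$, and in each case using the copies of $K_\infty$ through edges of $\mcM$ (supplied by $2$-homogeneity) together with $\ge k$-homogeneity applied to carefully chosen enlargements of $\mcA$ and $f(\mcA)$ to locate the required vertices. This case analysis, in the spirit of the structural arguments of Sections~\ref{sec1} and \ref{sec2}, is where the real technical work will lie; the combined strength of $1$- and $2$-homogeneity together with $\ge k$-homogeneity should exclude the kind of obstructions that forced the inhomogeneous graphs $\mcG_t\,\dot\cup\,\mcH$ and $\mcH_{t,j}$ to appear in the previous cases.
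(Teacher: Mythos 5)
Your inductive framework is sound as far as it goes: downward induction on $t$, combined with the one-point extension claim and $(t+1)$-homogeneity, does yield $t$-homogeneity, and this is a legitimate reorganization of the problem (the paper instead takes the minimal $n$ at which homogeneity fails and exploits $(n-1)$-homogeneity, i.e.\ homogeneity one level \emph{below} the failure rather than one level above). The genuine gap is that the one-point extension claim is the entire mathematical content of the lemma, and you do not prove it: you announce a case split on the adjacency pattern of $c$ over $\mcA$ and state that this is ``where the real technical work will lie.'' Moreover, the one concrete mechanism you do describe cannot work as stated: you propose to ``embed both $\mcA\cup\{c\}$ and a target configuration $f(\mcA)\cup\{d\}$ into a common size-$k$ subgraph'' and then apply $\geq k$-homogeneity, but $d$ is precisely the vertex whose existence is in question, so it cannot be a constituent of the configuration you intend to build. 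To find $d$ you would first need to realise, over $f(\mcA)$, the quantifier-free type that $c$ realises over $\mcA$, and nothing in the hypotheses makes that realisability obvious --- it is essentially equivalent to what must be proved.

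For comparison, the paper's argument runs by contradiction from a minimal failure: it produces two non-adjacent vertices $a,b$ completing a common $(n-1)$-set $\mcG$ to two distinct orbits of the same isomorphism type, shows that no copy of $K_{k+n}$ can lie in the common neighbourhood (or common non-neighbourhood) of any non-adjacent pair, deduces from this that $\mcG_1\cong K_n$ and that $a,b$ are adjacent to all of $\mcG$, and finally derives a contradiction from a configuration of three overlapping copies of $K_\infty$ (the graphs $\mcH_b,\mcH_c,\mcH_d$ in Section~\ref{sec3}). Some analogue of that chain of structural lemmas is exactly what your deferred case analysis would have to contain. As written, the proposal is a plausible plan of attack, not a proof.
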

In order to prove this lemma, we assume that $\mcM$ is an infinite $\geq$$k-$homo\-geneous graph which is $1-$homogeneous and $2-$homogeneous such that there are finite $\mcG_1,\mcG_2\subseteq \mcM$ such that $\mcG_1\cong\mcG_2$ and yet $\mcG_1$ and $\mcG_2$ are not of the same orbit in $\mcM$. Let $n=|\mcG_1|$ and assume that $\mcM$ is $<$$n-$homogeneous, thus $\mcG_1$ is one of the smallest subgraphs of $\mcM$ whose isomorphism type does not determine its orbit. Due to Ramsey's theorem either $K_\infty$ or $K_\infty^c$ is embeddable in $\mcM$. We will assume that $K_\infty$ is embeddable in $\mcM$, and the reader may notice that all arguments can be carried out in the same way by changing all references to edges by non-edges and vice versa, in the case where $K_\infty^c$ is embeddable instead. 
\\\indent
Let $a\in\mcG_1$ and put $\mcG=\mcG_1-\{a\}$. As $\mcM$ is $(n-1)-$homogeneous we know that if $f:\mcG_1\rightarrow\mcG_2$ is an isomorphism, then the orbit of $\mcG$ in $\mcM$ is the same as the orbit of $\mcG_2-f(a)$. Thus we conclude that there is an element $b\in \mcM$ such that $\mcG\cup\{b\}$ is in the same orbit as $\mcG_2$, when $b$ is mapped to $f(a)$.
\begin{lma}\label{notincommonSpecial} There is no $\mcH\subseteq \mcM$ and $\mcH\cong K_{(k+n)}$ such that both $a$ and $b$ are adjacent (or non-adjacent) to all elements in $\mcH$. 
\end{lma}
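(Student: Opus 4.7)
The plan is a direct contradiction argument. Assume such an $\mcH$ exists; I will construct an automorphism of $\mcM$ that fixes $\mcG$ pointwise and sends $a$ to $b$. This places $\mcG_1 = \mcG \cup \{a\}$ and $\mcG \cup \{b\}$ in the same orbit, and since $\mcG \cup \{b\}$ lies in the orbit of $\mcG_2$ while $\mcG_1$ and $\mcG_2$ are in distinct orbits by hypothesis, this yields the desired contradiction.

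The first step is to record that $a$ and $b$ induce exactly the same adjacency pattern on $\mcG$. This follows directly from the construction of $b$ preceding the lemma: by $(n-1)$-homogeneity the isomorphism $f|_\mcG$ extends to an automorphism $\sigma$ of $\mcM$, and $b$ is defined as $\sigma^{-1}(f(a))$. Hence for each $v\in\mcG$,
\[
b\sim v \iff f(a)\sim \sigma(v)=f(v) \iff a\sim v,
\]
the first equivalence by $\sigma$ being an automorphism, the second by $f$ being an isomorphism $\mcG_1\to\mcG_2$. Consequently the map $h_0:\mcG\cup\{a\}\to\mcG\cup\{b\}$ fixing $\mcG$ pointwise and sending $a\mapsto b$ is a graph embedding.

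The second step is to enlarge $h_0$ by vertices from $\mcH$ so that the domain has at least $k$ elements, enabling $\geq$$k$-homogeneity. Since $|\mcH|=k+n$ and $|\mcG\cup\{a,b\}|\le n+1$, one can pick $\mcH'\subseteq \mcH\setminus(\mcG\cup\{a,b\})$ with $|\mcH'|\ge k-1$. Extend $h_0$ to $h:\mcG\cup\{a\}\cup\mcH'\to \mcM$ by the identity on $\mcH'$. The domain has size at least $n+k-1\ge k$, and $h$ is an embedding: adjacencies within $\mcG\cup\mcH'$ are preserved trivially; those between $a$ and $\mcG$ match those between $b$ and $\mcG$ by the first step; and those between $a$ and $\mcH'$ match those between $b$ and $\mcH'$ because $a$ and $b$ are assumed to bear the same relation (both edges, or both non-edges) to every vertex of $\mcH$.

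Applying $\geq$$k$-homogeneity now extends $h$ to an automorphism of $\mcM$ that fixes $\mcG$ and carries $a$ to $b$, giving the contradiction. The only part requiring any real care is the first step, namely the verification that the specific $b$ produced by the preceding construction really has the same $\mcG$-adjacencies as $a$; the rest is a routine size-plus-embedding computation using the assumption that $\mcH$ is monochromatically connected to both $a$ and $b$.
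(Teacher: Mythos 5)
Your proof is correct and follows essentially the same route as the paper's: choose $k-1$ vertices of $\mcH$ disjoint from $\mcG\cup\{a,b\}$, build the embedding fixing $\mcG$ and those vertices while sending $a\mapsto b$, and invoke $\geq$$k$-homogeneity to contradict the assumption that $\mcG_1$ and $\mcG_2$ lie in distinct orbits. The only difference is that you spell out the verification (via the automorphism defining $b$) that $a$ and $b$ have identical adjacencies to $\mcG$, which the paper leaves implicit in the word ``clearly.''
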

\begin{proof} 
If there exists such a graph $\mcH$ then let $\mcH_0\subseteq \mcH$ such that $|\mcH_0|=k-1$, $\mcH_0\cap (\mcG\cup\{a,b\})=\emptyset$ and let $f:\mcH_0\cup\mcG\cup\{a\}\to \mcH_0\cup\mcG\cup\{b\}$ map $\mcH_0\cup\mcG$ pointwise to itself and map $a$ to $b$. The function $f$ is clearly an isomorphism and thus the $\geq$$k-$homogeneity implies that $\mcG\cup\{a\}$ and $\mcG\cup\{b\}$ are in the same orbit, which is a contradiction.
\end{proof}
It is clear from the proof that the previous lemma also works if we replace $(k+n)$ in $K_{(k+n)}$ with some larger number or infinity. This will be used later.
\begin{cor} $a$ is not adjacent to $b$.
\end{cor}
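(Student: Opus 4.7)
The plan is to assume, for contradiction, that $a$ is adjacent to $b$ and to derive a violation of (the strengthened form of) Lemma~\ref{notincommonSpecial}. The key observation is that $2$-homogeneity, together with the standing assumption $K_\infty \subseteq \mcM$, forces every edge of $\mcM$ to lie inside some copy of $K_\infty$; once $a$ and $b$ are shown to sit in a common $K_\infty$, removing them yields an arbitrarily large clique to which both $a$ and $b$ are adjacent, which is exactly what Lemma~\ref{notincommonSpecial} (in its strengthened form) forbids.

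Concretely, I would first fix any edge $\{c,d\}$ sitting inside some $K_\infty \subseteq \mcM$, which exists by our embedding hypothesis. Since $\{a,b\}$ is also an edge and $\mcM$ is $2$-homogeneous, the map sending $(a,b)$ to $(c,d)$ is an embedding of a $2$-vertex induced subgraph of $\mcM$ into $\mcM$, and hence extends to an automorphism $\sigma$ of $\mcM$. Pulling the ambient $K_\infty$ back along $\sigma$ produces a copy of $K_\infty$ inside $\mcM$ containing both $a$ and $b$.

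Deleting $a$ and $b$ from this copy leaves a subgraph $\mcH \cong K_\infty$ every element of which is adjacent to both $a$ and $b$. As noted immediately after Lemma~\ref{notincommonSpecial}, that lemma remains valid when $K_{k+n}$ is replaced by $K_\infty$, so the existence of $\mcH$ contradicts Lemma~\ref{notincommonSpecial}, and we conclude that $a$ is not adjacent to $b$. There is no real obstacle here: the substantive content is entirely packaged into Lemma~\ref{notincommonSpecial}, and the corollary amounts to a one-line use of $2$-homogeneity to promote the edge $\{a,b\}$ into a copy of $K_\infty$.
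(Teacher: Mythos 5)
Your proposal is correct and is essentially the paper's own argument: both use $2$-homogeneity to place the edge $\{a,b\}$ into the unique $2$-orbit of adjacent pairs, which is realized inside a copy of $K_\infty$, and then invoke the strengthened (infinite) form of Lemma~\ref{notincommonSpecial} to reach a contradiction. No substantive difference; you have merely spelled out the automorphism pullback that the paper leaves implicit.
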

\begin{proof}
The $2-$orbit of elements which are adjacent to each other is uniquely determined by its isomorphism type, and as such an orbit exists in $K_\infty$ it follows that if $a$ and $b$ were adjacent to each other then there would be a graph $\mcH\subseteq\mcM$ such that $\mcH\cong K_\infty$ and both $a$ and $b$ are adjacent to all elements in $\mcH$, contradicting Lemma \ref{notincommonSpecial}.
\end{proof}
The previous corollary together with the fact that we only have a single $2-$orbit for distinct non-adjacent elements implies the following generalization of Lemma \ref{notincommonSpecial}.
\begin{lma}\label{notincommon} 
Let $\alpha,\beta\in M$ such that $\alpha\neq \beta$ and $\alpha $ is not adjacent to $\beta$. There is no $\mcH\subseteq \mcM$ and $\mcH\cong K_{(k+n)}$ such that both $\alpha$ and $\beta$ are adjacent (or non-adjacent) to all elements in $\mcH$. 
\end{lma}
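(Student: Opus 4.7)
The plan is to reduce Lemma~\ref{notincommon} directly to the already-established Lemma~\ref{notincommonSpecial} by using 2-homogeneity of $\mcM$ together with the corollary that $a$ is not adjacent to $b$. First I would observe that since $a\neq b$ and $a$ is not adjacent to $b$, the pair $(a,b)$ and the pair $(\alpha,\beta)$ both induce the same 2-element subgraph of $\mcM$, namely two distinct vertices with no edge between them. Thus $(a,b)$ and $(\alpha,\beta)$ have the same isomorphism type as labelled 2-element subgraphs.

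Because $\mcM$ is 2-homogeneous, any isomorphism between 2-element induced subgraphs extends to an automorphism of $\mcM$. In particular, there exists an automorphism $\sigma$ of $\mcM$ with $\sigma(a)=\alpha$ and $\sigma(b)=\beta$. Now I would argue by contradiction: suppose there is some $\mcH\subseteq\mcM$ with $\mcH\cong K_{(k+n)}$ such that every vertex of $\mcH$ is adjacent (respectively, non-adjacent) to both $\alpha$ and $\beta$. Applying $\sigma^{-1}$, the preimage $\sigma^{-1}(\mcH)$ is another copy of $K_{(k+n)}$ in $\mcM$, and since $\sigma^{-1}$ preserves adjacency, every vertex of $\sigma^{-1}(\mcH)$ is adjacent (respectively, non-adjacent) to both $a$ and $b$. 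This directly contradicts Lemma~\ref{notincommonSpecial}.

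I do not anticipate any genuine obstacle: the entire content of the generalization is that the distinguished pair $(a,b)$ appearing in Lemma~\ref{notincommonSpecial} may be replaced by an arbitrary distinct non-adjacent pair, and that is precisely what 2-homogeneity delivers once we know from the preceding corollary that $a$ and $b$ are themselves non-adjacent. The author's remark that Lemma~\ref{notincommonSpecial} continues to hold if $K_{(k+n)}$ is replaced by a larger complete graph plays no role in this particular argument, since $\sigma^{-1}(\mcH)$ has exactly the same size $k+n$ as $\mcH$.
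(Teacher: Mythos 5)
Your argument is correct and is exactly the one the paper intends: the sentence preceding Lemma~\ref{notincommon} says it follows from the corollary that $a$ is not adjacent to $b$ together with the uniqueness of the $2$-orbit of distinct non-adjacent pairs, which is precisely your automorphism $\sigma$ carrying $(a,b)$ to $(\alpha,\beta)$ and pulling $\mcH$ back to contradict Lemma~\ref{notincommonSpecial}. You have simply written out the transfer step that the paper leaves implicit, so there is nothing to add.
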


\begin{lma}$a$ and $b$ are adjacent to all elements in $\mcG$.
\end{lma}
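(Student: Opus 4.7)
The plan is to argue by contradiction, combining a short extension-of-isomorphism reduction with a Ramsey-style double application of Lemma \ref{notincommon}. The opening move is to refine the construction of $b$: I claim the isomorphism witnessing $\mcG_1 \cong \mcG_2$ can be arranged to fix $\mcG$ pointwise. Indeed, given any isomorphism $f:\mcG_1 \to \mcG_2$, the restriction $f|_\mcG$ is an isomorphism between two $(n-1)$-vertex subgraphs and thus, by $(n-1)$-homogeneity of $\mcM$, extends to an automorphism $h$ of $\mcM$; if we set $b := h^{-1}(f(a))$, then $h^{-1} \circ f$ is an isomorphism $\mcG \cup \{a\} \to \mcG \cup \{b\}$ that is the identity on $\mcG$ and sends $a \mapsto b$. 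We may take this $b$ to be the one fixed at the start of the section, and in particular for every $c \in \mcG$, $c$ is adjacent to $a$ if and only if $c$ is adjacent to $b$.

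Now suppose, for contradiction, that some $c \in \mcG$ is not adjacent to $a$. By the preceding paragraph $c$ is also not adjacent to $b$, and the corollary above Lemma \ref{notincommon} gives that $a$ and $b$ are not adjacent either, so $\{a,b,c\}$ is an independent set of three distinct vertices. Fix any subgraph $\mcK \subseteq \mcM - \{a,b,c\}$ with $\mcK \cong K_\infty$ (possible since $\mcM$ contains $K_\infty$), and classify each $x \in \mcK$ by the pair $(x \sim a,\, x \sim b)$. If infinitely many $x \in \mcK$ were adjacent to both $a$ and $b$, then $k+n$ of them would form a $K_{k+n}$ all of whose vertices are adjacent to both, contradicting Lemma \ref{notincommon} on the non-adjacent pair $(a,b)$; the analogous statement rules out infinitely many $x$ non-adjacent to both. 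Hence infinitely many elements of $\mcK$ are of mixed type, and by pigeonhole, after swapping the roles of $a$ and $b$ if necessary, there is an infinite $\mcK_1 \subseteq \mcK$ with every $x \in \mcK_1$ satisfying $x \sim a$ and $x \not\sim b$; being an infinite subset of $\mcK$, $\mcK_1$ is itself a $K_\infty$.

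Finally I split $\mcK_1$ by adjacency to $c$. If infinitely many $x \in \mcK_1$ are adjacent to $c$, then $k+n$ of them form a $K_{k+n}$ whose vertices are all adjacent to both $a$ and $c$, contradicting Lemma \ref{notincommon} applied to the non-adjacent pair $(a,c)$. Otherwise infinitely many $x \in \mcK_1$ are non-adjacent to $c$, and $k+n$ of them form a $K_{k+n}$ whose vertices are all non-adjacent to both $b$ and $c$, contradicting Lemma \ref{notincommon} applied to the non-adjacent pair $(b,c)$. Either way we reach a contradiction, so every $c \in \mcG$ is adjacent to $a$; by the equivalence established in paragraph one, every $c \in \mcG$ is also adjacent to $b$.

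The main obstacle, and the only non-routine step, is the opening reduction: from a raw isomorphism $f:\mcG_1 \to \mcG_2$ one only learns that some $c' \in \mcG$ is non-adjacent to $b$, and $c' \neq c$ in general, so there is no single vertex around which to run the three-way Ramsey pigeonhole. Once the isomorphism is arranged to fix $\mcG$ pointwise, everything collapses to a clean Ramsey argument exploiting the three non-adjacent pairs $(a,b)$, $(a,c)$, $(b,c)$ against Lemma \ref{notincommon}.
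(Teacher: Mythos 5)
Your proof is correct and follows essentially the same route as the paper: after observing that all three pairs $(a,b)$, $(a,c)$, $(b,c)$ are non-adjacent, you derive the contradiction by applying Lemma \ref{notincommon} repeatedly to a $K_\infty$, exactly as the paper does (the paper takes $K_\infty(a)-\{a\}$ and shrinks it past the finitely many neighbours of $b$ and $c$, while you take an arbitrary $K_\infty$ and pigeonhole --- a cosmetic difference). Note only that your ``opening reduction'' is already built into the paper's construction of $b$: the tuple $(\mcG,b)$ was chosen to lie in the same orbit as $(\mcG_2-f(a),f(a))$, so the isomorphism $\mcG\cup\{a\}\to\mcG\cup\{b\}$ fixes $\mcG$ pointwise from the outset, and that paragraph is a (harmless) re-derivation rather than a new step.
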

\begin{proof}
Assume $a$ is not adjacent to some element $c\in\mcG$. As $\mcG_1\cong\mcG_2$ it follows that $b$ is non-adjacent to the same element. Let $\mcH\subseteq \mcM$ be such that $a$ is adjacent to all elements in $\mcH$ and $\mcH\cong K_\infty$. By Lemma \ref{notincommon} we may assume that $b$ and $c$ are not adjacent to any elements in $\mcH$. However again using Lemma \ref{notincommon} but now on $(b,c)$ give us a contradiction.
\end{proof}

\begin{cor} $\mcG_1\cong K_n$.
\end{cor}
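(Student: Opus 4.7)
I will argue by contradiction. Suppose $\mcG_1\not\cong K_n$. By the previous lemma, $a$ is adjacent to every vertex of $\mcG$, so any non-edge of $\mcG_1$ must lie within $\mcG$; fix $c,d\in\mcG$ with $c\not\sim d$. The isomorphism $\mcG_1\cong\mcG_2$ that fixes $\mcG$ pointwise and sends $a\mapsto b$ forces $b$ also to be adjacent to every vertex of $\mcG$, and combined with the preceding corollary $a\not\sim b$, the induced subgraph on $\{a,b,c,d\}$ is an induced $C_4$ in $\mcM$ with non-edges $\{a,b\}$ and $\{c,d\}$.

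The plan is to mimic the proof of the previous lemma: construct an infinite clique of $\mcM$ to which a non-adjacent pair is commonly non-adjacent, and then invoke Lemma \ref{notincommon} for a contradiction. Since $a\sim c$ and $K_\infty\subseteq\mcM$, by $2$-homogeneity I may choose $\mcH\cong K_\infty$ containing $\{a,c\}$; letting $\mcH_0=\mcH\setminus\{a,c\}\cong K_\infty$, both $a$ and $c$ are adjacent to every vertex of $\mcH_0$. By Lemma \ref{notincommon} applied to the non-edges $\{a,b\}$ and $\{c,d\}$, at most $k+n-1$ vertices of $\mcH_0$ are adjacent to $b$ and at most $k+n-1$ are adjacent to $d$ (otherwise a $K_{k+n}$ commonly adjacent to $\{a,b\}$ or to $\{c,d\}$ would appear). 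Discarding these finitely many vertices yields $\mcH'\cong K_\infty$ on which $a,c$ are adjacent to every element while $b,d$ are non-adjacent to every element.

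The main obstacle is that the obvious non-adjacent pair associated to $\mcH'$, namely $\{b,d\}$, is in fact an edge of $\mcM$: because $b$ is adjacent to every vertex of $\mcG$ and $d\in\mcG$, one has $b\sim d$, so Lemma \ref{notincommon} cannot be applied directly to $(b,d)$ with $\mcH'$. To overcome this I plan to use $2$-homogeneity a second time. Since $\{a,b\}$ and $\{c,d\}$ lie in the same $2$-orbit, there is an automorphism $\sigma$ of $\mcM$ with $\sigma(a)=c$ and $\sigma(b)=d$; applying $\sigma$ to $\mcH'$ produces another infinite clique $\sigma(\mcH')$ whose elements are all adjacent to $c,\sigma(c)$ and all non-adjacent to $d,\sigma(d)$. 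By combining $\mcH'$ with its $\sigma$-image (for instance by intersecting the two cliques and trimming once more by Lemma \ref{notincommon}), and iterating this transport if necessary using the ample supply of automorphisms granted by $\geq k$-homogeneity, I expect to extract a non-adjacent pair $\{x,y\}$ with both $x$ and $y$ non-adjacent to a $K_{k+n}$-subclique. Lemma \ref{notincommon} applied to $\{x,y\}$ then produces the desired contradiction, completing the proof that $\mcG_1\cong K_n$.
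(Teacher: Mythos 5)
There is a genuine gap, and it occurs exactly where you signal it yourself: the final step of your argument is a plan (``by combining $\mcH'$ with its $\sigma$-image \dots and iterating this transport if necessary \dots I expect to extract a non-adjacent pair'') rather than a proof. Nothing guarantees that $\mcH'$ and $\sigma(\mcH')$ have an infinite, or even nonempty, intersection, and you never actually exhibit a non-adjacent pair $\{x,y\}$ together with a $K_{k+n}$ to which both are adjacent or both non-adjacent. Since Lemma \ref{notincommon} only applies to \emph{non-adjacent} pairs, and the pair $\{b,d\}$ you have in hand is an edge (as you correctly note), the contradiction is never reached. The earlier steps (the induced $C_4$ on $\{a,b,c,d\}$ and the trimming of $\mcH$ to $\mcH'$) are fine, but they do not by themselves close the argument.

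The missing idea is much simpler and is what the paper uses: the vertex $a$ was an \emph{arbitrary} element of $\mcG_1$ in the setup of this section, so the preceding lemma (``$a$ and $b$ are adjacent to all elements in $\mcG$'') holds for every choice of $a\in\mcG_1$ with $\mcG=\mcG_1-\{a\}$. In particular, if $c,d\in\mcG_1$ were non-adjacent, you could rerun the whole construction with $a=c$, and the lemma would force $c$ to be adjacent to every element of $\mcG_1-\{c\}$, including $d$ --- an immediate contradiction. Hence every vertex of $\mcG_1$ is adjacent to all the others and $\mcG_1\cong K_n$, with no further machinery needed.
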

\begin{proof}The element $a$ is just an arbitrary element chosen in $\mcG_1$ and $a$ is adjacent to all other elements in $\mcG_1$, thus the result follows.
\end{proof}
\noindent We know that one of the orbits for tuples whose isomorphism type is $K_n$ is included in some $K_\infty$, thus assume that $\mcG_1$ is such. It follows that there is $\mcH_b\subseteq \mcM$ such that $\mcH_b\cong K_\infty$, $a\in \mcH_b$ and $\mcG\subseteq \mcH_b$. By Lemma \ref{notincommon} $b$ may not be adjacent to more than $k+n-1$ elements in $\mcH_b$ where the elements of $\mcG$ are included. We may however assume that the only elements in $\mcH_b$ which $b$ is adjacent to are the elements in $\mcG$ by choosing to not include all elements which $b$ are adjacent to in $\mcH_b$. \\\indent
Let $c,d$ be some distinct elements in $\mcG$. As $(\mcG-\{c\})\cup\{b\}$ is in the same $(n-1)$-orbit as $\mcG$, there exists some subgraph $\mcH_c\subseteq \mcM$ such that $\mcH_c\cong K_\infty $ and $(\mcG-\{c\})\cup\{b\}\subseteq \mcH_c$. If $c$ would be a part of $\mcH_c$ or adjacent to $k$ or more elements in $\mcH_c$ we would be able to create an embedding mapping $\mcG\cup \{b\}$ to $\mcG\cup\{a\}$ mapping at least $k$ elements, thus the $\geq$$ k-$homogeneity gives a contradiction. Thus $c$ is adjacent to at most $k$ elements in $\mcH_c$. For any element $\gamma\in \mcH_b-\mcG$ we know that $b$ is not adjacent to $\gamma$, thus Lemma \ref{notincommon} implies that $\gamma$ is adjacent to at most $k+n-1$ elements in $\mcH_c$. In the same way we may find a graph $\mcH_d\subseteq \mcM$ such that $\mcH_d\cong K_\infty$, $(\mcG-\{d\})\cup\{b\}\subseteq \mcH_d$ and $d$ is adjacent to at most $k$ elements in $\mcH_d$. If $e\in \mcH_d$ and $e$ is not adjacent to $d$ then it follows from Lemma \ref{notincommon} that $e$ is adjacent to at most $k+n-1$ elements in both $\mcH_b$ and $\mcH_c$ as $d$ is adjacent to all elements in both of those graphs. However if we let $e'\in \mcH_c$ be such that $e'$ is not adjacent to $e$ and $e'$ is adjacent to at most $k$ elements in $\mcH_b$ then both of $e$ and $e'$ are non-adjacent to some $K_\infty\subseteq \mcH_b$. But this contradicts Lemma \ref{notincommon}, thus the proof of Lemma \ref{thirdmainlemma} is complete.

\end{document}